\newtheorem{theorem}{Theorem}[section]
\newtheorem{lemma}[theorem]{Lemma}
\newtheorem{example}[theorem]{Example}
\newtheorem{proposition}[theorem]{Proposition}
\newcommand{\dx}{\, \mbox{\rm d}}
\newcommand{\Sh}{\mbox{\rm Sh}}
\begin{document}
\title[Olson Order of Quantum Observables]{Olson Order of Quantum Observables}
\author[Anatolij Dvure\v{c}enskij]{Anatolij Dvure\v{c}enskij$^{1,2}$}
\date{}%
\maketitle

\begin{center}  \footnote{Keywords: Effect algebra, MV-algebra, lattice effect algebra, monotone $\sigma$-complete effect algebra, observable, spectral resolution, order of observables, Hilbert space, Hermitian operator, Olson order

 AMS classification: 81P15, 03G12, 03B50, 06C15

The paper has been supported by the Slovak Research and Development Agency under the contract APVV-0178-11, the grant VEGA No. 2/0069/16 SAV
 and GA\v{C}R 15-15286S. }
Mathematical Institute,  Slovak Academy of Sciences\\
\v Stef\'anikova 49, SK-814 73 Bratislava, Slovakia\\
$^2$ Depart. Algebra  Geom.,  Palack\'{y} Univer.\\
17. listopadu 12, CZ-771 46 Olomouc, Czech Republic\\

E-mail: {\tt
dvurecen@mat.savba.sk}
\end{center}

\begin{abstract}
Using ideas of Olson \cite{Ols} who showed that the system of effect operators of a Hilbert space can be ordered by the so-called spectral order such that the system of effect operators is a complete lattice. Using his ideas, we introduce a partial order, called the Olson order, on the set of bounded observables of a complete lattice effect algebra. We show that the set of bounded observables is a Dedekind complete lattice.
\end{abstract}

\section{Introduction}

Quantum mechanics is a very effective way for description of the physical world. In the last decades new discoveries have been found for its applications to quantum information and quantum computing. In a classical physical system, the measurable events form a Boolean algebra. However, in the quantum mechanical world, this is not a case. Therefore, Birkhoff and von Neumann \cite{BiNe} introduced orthomodular lattices as the event structure describing quantum mechanical experiments. Later, orthomodular lattices and orthomodular posets were considered as the standard quantum logics \cite{Var}. In the nineties of the last century, two equivalent quantum structures, D-posets, \cite{KoCh},  and effect algebras, \cite{FoBe}, were introduced. They generalize many known quantum structures like Boolean algebras, orthomodular posets, MV-algebras, orthoalgebras, etc.

Effect algebras are partial algebraic systems with the primary notion $+$ such that $a+b$ denotes the disjunction of two mutually excluding events. For example, for sets $F$ and $G$, $F+G$ means $F$ and $G$ are disjoint and $F+G = F\cup G$.

The most important example of effect algebras, which is also crucial for the so-called Hilbert space quantum mechanics, is the set $\mathcal E(H)$ of all Hermitian operators on a real, complex or quaternionic Hilbert space $H$ which are between the zero and the identity operators. It is an interval in the po-group $\mathcal B(H)$ of Hermitian operators under the standard ordering of operators: $A\le B$ iff $(A\phi,\phi)\le (B\phi,\phi)$ for each unit vector $\phi \in H$. The second quantum structure  important for quantum mechanics is the system $\mathcal P(H)$ of all orthogonal projectors on $H$, or equivalently, the system $\mathcal L(H)$ of all closed subspaces of $H$. From the ordering point of view, using the standard order of operators,  $\mathcal P(H)\cong \mathcal L(H)$ is a complete orthomodular poset, \cite{Var}, $\mathcal E(H)$ is not a lattice, and $\mathcal B(H)$ is an antilattice, \cite{Kad}, i.e. only comparable operators have $\vee$ and $\wedge$.

Olson \cite{Ols} introduced in 1971 a so-called spectral order of  Hermitian operators which is defined as follows: If $A,B$ are two Hermitian operators with the spectral measures $E_A$ and $E_B$, then $A\preceq_s B$ iff $E_B((-\infty,t])\le E_A((-\infty,t])$ for each $t \in \mathbb R$. With respect  to this order, $\mathcal E(H)$ is a complete lattice which is a sublattice of $\mathcal B(H)$ with respect to the spectral order, the latter is a Dedekind complete lattice. We note that the spectral order was re-introduced in \cite{dGr} in 2005,  and in the last period, it is studied in some papers in more details, for example, in \cite{GLP} some interesting properties of $\mathcal E(H)$ are established.

Gudder in \cite{Gud} introduced a so-called logical order $\preceq$ on $\mathcal B(H)$ by $A\preceq B$ iff $AB = A^2$. Under this order, $\mathcal B(H)$ is a near lattice, i.e. $A \wedge B$ and $A\vee B$ exist only if there $C$ such that $A,B \preceq C$.

A crucial notion of the theory of effect algebras is an observable which models quantum measurement and which in the case of $\mathcal P(H)$ is equivalent to spectral measures and they are in a one-to-one description with self-adjoint operators. In addition, observables for $\mathcal E(H)$ are POV-measures.

Using ideas of Olson, \cite{Ols}, we introduce a spectral order called also the Olson order on the set of all observables of any monotone $\sigma$-complete effect algebra, and we show that the set of all bounded observables is with respect to the Olson order a Dedekind complete lattice and a Dedekind $\sigma$-lattice for complete lattice effect algebras and $\sigma$-lattice effect algebras, respectively. For the Olson order we have that for question observables $q_a\preceq_s q_b$ iff $a\le b$. In particular, we show that the set of observables whose spectrum is in the real interval $[0,1]$ forms a structure similar to the structure of $\mathcal E(H)$ under the spectral order.

The paper is organized as follows. Section 2 gathers the basic notions of the theory of effect algebras. Section 3 is the main part of the paper, where the Olson order of observables is introduced, and the lattice properties of the set of bounded observables of a complete lattice effect algebra are established. An equivalent definition of the spectral order with applications in $\mathcal P(H)$ and $\mathcal E(H)$ is presented in Section 4. Finally, Section 5 deals with spectral orders in some particular cases of effect algebras as $\sigma$-algebras, Boolean $\sigma$-algebras, tribes and effect tribes, where the latter two structures are ones of $[0,1]$-valued functions; here the lattice operations, $+$, and the order of functions are defined by points.

\section{Elements of Effect Algebras}

We remind that according to \cite{FoBe}, an {\it effect algebra} is  a partial algebra $E =
(E;+,0,1)$ with a partially defined operation $+$ and with two constant
elements $0$ and $1$  such that, for all $a,b,c \in E$, we have
\begin{enumerate}

\item[(i)] $a+b$ is defined in $E$ if and only if $b+a$ is defined, and in
such a case $a+b = b+a;$

 \item[(ii)] $a+b$ and $(a+b)+c$ are defined if and
only if $b+c$ and $a+(b+c)$ are defined, and in such a case $(a+b)+c
= a+(b+c);$

 \item[(iii)] for any $a \in E$, there exists a unique
element $a' \in E$ such that $a+a'=1;$

 \item[(iv)] if $a+1$ is defined in $E$, then $a=0.$
\end{enumerate}

If we define $a \le b$ if and only if there exists an element $c \in
E$ such that $a+c = b$, then $\le$ is a partial ordering on $E$, and
we write $c:=b-a.$ It is clear that $a' = 1 - a$ for any $a \in E.$ We note that an effect algebra is not necessarily a lattice.  For more information about effect algebras we can recommend the monograph \cite{DvPu}.

There are two basic sources of examples of effect algebras:

(1) Let $E$ be a system of fuzzy sets on $\Omega,$ that is $E \subseteq [0,1]^\Omega,$ such that
(i) $1 \in E$, (ii) $f \in E$ implies $1-f \in E$, and (iii) if $f,g
\in E$ and $f(\omega) \le 1 -g(\omega)$ for any $\omega \in \Omega$,
then $f+g \in E$. Then $E$ is an effect algebra of fuzzy sets which
is not necessarily a Boolean algebra.

(2) If $G$ is an Abelian partially ordered group written additively, $u \in G^+$, then $\Gamma(G,u):=[0,u]=\{g \in G: 0 \le g \le u\}$ is an effect algebra with $0=0,$ $1=u$ and $+$ is the group addition of elements if it exists in $\Gamma(G,u).$ In particular, if $G =\mathbb R,$ the group of real numbers, then $[0,1]=\Gamma(\mathbb R,1)$ is the standard effect algebra of the real interval $[0,1];$ it is an interval effect algebra. Here we can assign also the effect algebra $\mathcal E(H)$ because $\mathcal E(H)=\Gamma(\mathcal B(H),I)$, where $\mathcal B(H)$ is the set of Hermitian operators and $I$ is the identity operator.

We say that an effect algebra $E$ satisfies the Riesz Decomposition Property (RDP for short) if, for all $a_1,a_2,b_1,b_2 \in E$ such that $a_1 + a_2 = b_1+b_2,$ there are four elements $c_{11},c_{12},c_{21},c_{22}$ such that $a_1 = c_{11}+c_{12}$, $a_2= c_{21}+c_{22},$ $b_1= c_{11} + c_{21}$ and $b_2= c_{12}+c_{22}$. We note that if an effect algebra $E$ satisfies RDP, there is a unique po-groups $G$ satisfying RDP with a fixed strong unit (i.e. given $g\in G$, there
is an integer $n\ge 1$ such that $g \le nu$) such that $E \cong \Gamma(G,u)$, see \cite{Rav}.

A mapping $h: E\to F$ is said to be a {\it homomorphism} of effect algebras $E$ and $F$ such that (i) $h(1)=1$, (ii) if $a+b$ exists in $E$, then $h(a)+h(b)$ exists in $F$ and $h(a+b)=h(a)+h(b)$. If $h$ is bijective such that both $h$ and $h^{-1}$ are homomorphisms, then $h$ is said to be an {\it isomorphism}.

We define $\sum_{i=1}^n a_i:= a_1+\cdots +a_n$, if the element on the right-hand exists in $E.$ A system of elements $\{a_i: i \in I\}$ is said to be {\it summable} if, for any finite set $F$ of $I,$ the element $a_F:= \sum_{i\in F} a_i$ is defined in $E.$  If there is an element $a:= \sup \{a_F:  F$ is a finite subset of $I\},$ we call it the {\it sum} of $\{a_i: i \in I\}$ and we write $a = \sum_{i \in I}a_i$.

An effect algebra $E$ is {\it monotone} $\sigma$-{\it complete} if, for any sequence $a_1 \le a_2\le \cdots,$ the element $a = \bigvee_n a_n$  is defined in $E$ (we write $\{a_n\}\nearrow a$). Equivalently, every summable sequence has a sum. If an effect algebra is a lattice or a $\sigma$-lattice or a complete lattice, we say that $E$ is a {\it lattice effect algebra}, a $\sigma$-{\it lattice effect algebra}, and a {\it complete lattice effect algebra}, respectively.

If $E$ and $F$ are two monotone $\sigma$-complete effect algebras, a homomorphism $h:E \to F$ is said to be a $\sigma$-{\it homomorphism} if $\{a_n\} \nearrow a$ implies $\{h(a_n)\} \nearrow h(a)$ for $a, a_1,\ldots \in E$.

An {\it effect-tribe}  is any system ${\mathcal T}$ of fuzzy sets on
$\Omega\ne \emptyset $ such that (i) $1 \in {\mathcal T}$, (ii) if $f
\in {\mathcal T},$ then $1-f \in {\mathcal T}$, (iii) if $f,g \in {\mathcal T}$,
$f \le 1-g$, then $f+g \in {\mathcal T},$ and (iv) for any sequence
$\{f_n\}$ of elements of ${\mathcal T}$ such that $f_n \nearrow f$
(pointwise), then $f \in {\mathcal T}$. It is evident that any
effect-tribe is a monotone $\sigma$-complete effect algebra.

We note that $\mathcal E(H)$ is isomorphic to some effect-tribe. Indeed, let $\Omega:=\{\phi\in H: \|\phi\| = 1\}$, and given $A \in \mathcal E(H)$, let $f_A:\Omega\to [0,1]$ such that $f_A(\phi)=(A\phi,\phi)$, $\phi \in \Omega$. Then $\mathcal T(H):=\{f_A: A \in\mathcal E(H)\}$ is an effect-tribe $\sigma$-isomorphic to $\mathcal E(H)$.

An element of an effect algebra $E$ is said to be {\it sharp} if $a \wedge a'$ exists in $E$ and $a\wedge a'=0.$ Let $\Sh(E)$ be the set of sharp elements of $E$. Then (i) $0,1\in \Sh(E),$ (ii) if $a \in \Sh(E),$ then $a'\in \Sh(E).$ If $E$ is a lattice effect algebra, then $\Sh(E)$ is an orthomodular lattice which is a subalgebra and a sublattice of $E$,  \cite{JeRi}. If an effect algebra $E$ satisfies RDP, then by \cite[Thm 3.2]{Dvu2}, $\Sh(E)$ is even a Boolean algebra.

A very important family of effect algebras is the family of
MV-algebras, which were introduced by Chang \cite{Cha}.

We recall that an MV-algebra is an algebra $M = (M;\oplus, ^*,0,1)$
of type (2,1,0,0) such that, for all $a,b,c \in M$, we have

\begin{enumerate}
\item[(i)]  $a \oplus  b = b \oplus a$;
\item[(ii)] $(a\oplus b)\oplus c = a \oplus (b \oplus c)$;
\item[(iii)] $a\oplus 0 = a;$
\item[(iv)] $a\oplus 1= 1;$
\item[(v)] $(a^*)^* = a;$
\item[(vi)] $a\oplus a^* =1;$
\item[(vii)] $0^* = 1;$
\item[(viii)] $(a^*\oplus b)^*\oplus b=(a\oplus b^*)^*\oplus a.$
\end{enumerate}

If we define a partial operation $+$ on $M$ in such a way that $a+b$
is defined in $M$ if and only if $a \le b^*$ and  we set
$a+b:=a\oplus b$, then $(M;+,0,1)$ is an effect algebra with RDP.
For example, if $G$ is an Abelian lattice ordered group and $u \ge0,$ then
$(\Gamma(G,u); \oplus, ^*, 0,u)$, where  $\Gamma(G,u):= \{g\in G: 0\le g \le u\},$ $a\oplus b :=(a+b) \wedge u$, and $a^* := u-a$ $(a,b \in \Gamma(G,u))$ is an MV algebra, and every MV algebra arises in this way. We recall that every MV-algebra is a distributive lattice. In particular, if we take $\mathbb R,$ the $\ell$-group of real numbers, then $[0,1]= (\Gamma(\mathbb R,1); \oplus, ^*, 0,1)$ is the standard MV-algebra of the real interval $[0,1].$

We recall that a {\it tribe} on $\Omega \ne \emptyset$
is a collection ${\mathcal T}$ of fuzzy sets from $[0,1]^\Omega$ such
that (i) $1 \in {\mathcal T}$, (ii) if $f \in {\mathcal T}$, then $1 - f \in
{\mathcal T},$ and (iii) if $\{f_n\}$ is a sequence from ${\mathcal T}$,
then $\min \{\sum_{n=1}^\infty f_n,1 \}\in {\mathcal T}$.  A tribe is
always a $\sigma$-complete MV-algebra, where all operations are defined by points. We note that a tribe is a generalization of a $\sigma$-algebra of subsets of $\Omega$, because if $f_n = \chi_{A_n},$ where $\chi_A$ is the characteristic function of the set $A,$  then  $\min \{\sum_{n=1}^\infty \chi_{A_n},1 \} = \chi_{\bigcup_n A_n}.$

We note that every tribe is an effect-tribe, but the converse is not true, in general.

The notions of an effect-tribe and of a tribe are important for theory of effect algebras because every monotone $\sigma$-complete effect algebra with RDP is a $\sigma$-homomorphic image of some effect-tribe with RDP, see \cite{BCD}, and every $\sigma$-complete effect algebra is a $\sigma$-homomorphic image of some tribe, see \cite{Dvu1, Mun}.

\section{Observables and Olson's Order of Observables}

In this section, we introduce observables as models of quantum measurements, and we introduce the spectral order, we call it also the Olson order, of observables, and we exhibit the lattice properties of the set of all bounded observables.

Let $E$ be a monotone $\sigma$-complete effect algebra. An {\it observable} on $E$ is any mapping $x:\mathcal B(\mathbb R)\to E$, where $\mathcal B(\mathbb R)$ is the Borel $\sigma$-algebra of the real line $\mathbb R$, such that (i) $x(\mathbb R)=1$, (ii) if $E,F \in \mathcal B(\mathbb R)$, $E \cap F= \emptyset$, then $x(E\cup F)=x(E)+x(F)$, and (iii) if $\{E_i\}$ is a sequence of Borel sets such that $E_i\subseteq E_{i+1}$ for each $i$ and $\bigcup_i E_i=E$, then $x(E) = \bigvee_i x(E_i)$.

In other words, any observable is a $\sigma$-homomorphism of monotone $\sigma$-complete effect algebras. The basic properties of observables are
(i) $x(\mathbb R \setminus E)=x(E)'$, (ii) $x(\emptyset)=0$, (iii) $x(E)\le x(F)$ whenever $E \subseteq F$, and $x(F\setminus E)= x(F)-x(E)$, (iv) if $\{E_i\} \searrow E$, then $x(E) = \bigwedge_i x(E_i)$.

We denote by $\mathcal O(E)$ the set of observables on $E$. If $f: \mathbb R\to \mathbb R$ is a Borel measurable function, then the mapping $f(x): \mathcal B(\mathbb R) \to E$, defined by $f(x)(E):=x(f^{-1}(E))$, $E \in \mathcal B(\mathbb R)$, is an observable on $E$. For example, $x^2$ denotes $f(x)$, where $f(t)=t^2$.

Observables can be constructed also as follows. Let $\{a_n\}$ be a finite or infinite sequence of summable elements, $\sum_n a_n = 1$, and let $\{t_n\}$ be a sequence of mutually different real numbers. Then the mapping $x: \mathcal B(\mathbb R) \to E$ defined by

$$
x(E):= \sum\{a_n: t_n \in E\}, \ E \in \mathcal B(\mathbb R), \eqno(3.0)
$$
is an observable on $E$. In particular, if $t_0=0$, $t_1=1$ and $a_0=a'$ $a_1=a$ for some fixed element $a \in E,$  $x$ defined by (3.0) is an observable, called a {\it question} corresponding to the element $a$, and we write $x=q_a$.

An observable $x$ is bounded if there is a compact set $C$ such that $x(C)=1$. The least closed subset $K$ of $\mathcal B(\mathbb R)$ is said to be a {\it spectrum} of $x$, and we denote it by $\sigma(x)$; since the natural topology of the real line satisfies the second countability axiom, $\sigma(x)$ exists, and $x(\sigma(x))=1$. An observable is a question iff $\sigma(x)\subseteq \{0,1\}$. We note that a point $\lambda \in \mathbb R$ belongs to $\sigma(x)$ iff for any open set $U$ containing $\lambda$, $x(U)\ne 0$.

The following representation theorem of observables was originally proved for $\sigma$-orthocomplete orthomodular posets with an order-determining system of $\sigma$-additive states in \cite{Cat}. Then it was generalized for observables on $\sigma$-lattice effect algebras in \cite[Thm 3.5]{DvKu}, as well as for $\sigma$-complete MV-algebras \cite[Thm 3.2]{DvKu}, for monotone $\sigma$-complete effect algebras with RDP \cite[Thm 3.9]{DvKu}, and for some monotone $\sigma$-complete effect algebras in \cite{270}.

\begin{theorem}\label{th:2.2}
Let $x$ be an observable on a $\sigma$-lattice effect algebra $E$. Given a real number $t \in \mathbb R,$ we put

$$ x_t := x((-\infty, t)). \eqno(3.1)
$$
Then

$$ x_t \le x_s \quad {\rm if} \ t < s, \eqno (3.2)$$

$$\bigwedge_t x_t = 0,\quad \bigvee_t x_t =1, \eqno(3.3)
$$
and
$$ \bigvee_{t<s}x_t = x_s, \ s \in \mathbb R. \eqno(3.4)
$$

Conversely, if there is a system $\{x_t: t \in \mathbb R\}$ of elements of $E$ satisfying {\rm
(3.2)--(3.4)}, then there is a unique observable $x$ on $E$ for which $(3.1)$ holds for any $t \in \mathbb R.$
\end{theorem}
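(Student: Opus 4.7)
The plan is to split the argument into the (easy) forward direction and the (harder) converse construction. For the forward direction, (3.2) is immediate from the monotonicity of $x$ applied to the containment $(-\infty,t)\subseteq(-\infty,s)$. For (3.4), pick a sequence $t_n\nearrow s$ with $t_n<s$; then $(-\infty,t_n)\nearrow(-\infty,s)$ in $\mathcal{B}(\mathbb{R})$, so the $\sigma$-continuity axiom (iii) of observables yields $x_{t_n}\nearrow x_s$, and monotonicity squeezes $\bigvee_{t<s}x_t$ to equal $x_s$. For (3.3), use $(-\infty,-n)\searrow\emptyset$ and $(-\infty,n)\nearrow\mathbb{R}$ together with $x(\emptyset)=0$ and $x(\mathbb{R})=1$ via the same two properties of observables recalled just before the theorem.

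The substance of the theorem is in the converse. Given $\{x_t\}$ satisfying (3.2)--(3.4), I would first declare $x((-\infty,t)):=x_t$ and extend to half-open intervals by $x([a,b)):=x_b-x_a$, which is well-defined in $E$ since (3.2) forces $x_a\le x_b$. Finite additivity on disjoint half-open intervals then follows by a short telescoping computation, and since every open $U\subseteq\mathbb{R}$ is a countable disjoint union of open intervals, one can define $x(U)$ as the join of the values on the finite sub-unions (using (3.4) to absorb open right endpoints into $\bigvee_{t<b}x_t$). The extension to all of $\mathcal{B}(\mathbb{R})$ is then carried out by a monotone class argument: let $\mathcal{D}$ be the class of Borel sets on which $x$ is consistently defined; $\mathcal{D}$ contains the generating $\pi$-system $\{(-\infty,t):t\in\mathbb{R}\}$, is stable under complementation via $x(\mathbb{R}\setminus E)=x(E)'$, and is stable under countable monotone unions because the $\sigma$-lattice hypothesis on $E$ supplies the required joins. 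Hence $\mathcal{D}=\mathcal{B}(\mathbb{R})$. Uniqueness is automatic because two observables agreeing on the $\pi$-system generating $\mathcal{B}(\mathbb{R})$ must coincide on the generated $\sigma$-algebra.

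The main obstacle, I expect, is exactly this extension step. Unlike the classical measure-theoretic setting there is no outer measure and no signed countable additivity to lean on; one has only the lattice joins and meets of $E$ and its partial addition $+$. The technical point is to reorganize every countable Borel operation as a monotone join of finite unions, which is precisely where the $\sigma$-lattice hypothesis is indispensable, and then to verify that the resulting $x$ is genuinely $\sigma$-additive and not merely monotone and finitely additive. This is the delicate verification carried out in \cite{Cat, DvKu}, and any plan that skipped it would be incomplete.
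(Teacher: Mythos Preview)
The paper does not actually prove this theorem: it is stated without proof, with the preceding paragraph attributing the result to \cite{Cat} and its generalization to $\sigma$-lattice effect algebras to \cite[Thm~3.5]{DvKu}. So there is no in-paper argument to compare against; your proposal is already more detailed than what the paper supplies, and you cite the same sources the paper defers to.

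Your forward direction is correct and complete as written. For the converse, your overall shape---define $x$ on the semiring of half-open intervals via $x([a,b)):=x_b-x_a$, extend, and invoke a $\pi$--$\lambda$/Dynkin argument---matches the standard route taken in \cite{DvKu}. One point deserves tightening: the Dynkin/monotone-class machinery is an \emph{uniqueness} tool (two observables agreeing on a generating $\pi$-system agree on $\mathcal B(\mathbb R)$), not an \emph{existence} tool. Writing ``let $\mathcal D$ be the class of Borel sets on which $x$ is consistently defined'' presupposes that $x$ has already been defined on all Borel sets, which is exactly what remains to be done. The genuine work---which you correctly flag as the obstacle---is to verify $\sigma$-additivity of the set function on the ring generated by half-open intervals and then to perform the extension by hand in the effect-algebra setting (no outer measure available); this is precisely what \cite[Thm~3.5]{DvKu} carries out. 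Your honest deferral to that reference is appropriate, and indeed is all the paper itself does.
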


We note that Theorem \ref{th:2.2} holds also if we change the system $\{x_t: t \in \mathbb R\}$ to a system $\{x_t: t \in \mathbb Q\}$ satisfying (3.2)--(3.4), where $\mathbb Q$ is the set of all rational numbers.

The system $\{x_t: t \in \mathbb R\}$ from Theorem \ref{th:2.2} satisfying (3.2)--(3.4) is said to be the {\it spectral resolution} of an observable $x$.

For example, if $x=q_a$ for some element $a \in E$, then the spectral resolution for $q_a$ on a monotone $\sigma$-complete effect algebra $E$ is as follows

$$
q_a((-\infty, t))= \left\{\begin{array}{ll} 0 & \mbox{if} \ t\le 0,\\
a' & \mbox{if}\ 0< t\le 1,\\
1 & \mbox{if}\ 1<t
\end{array}
\right.
\eqno(3.5)
$$
for $t \in \mathbb R$.

The spectral resolution was used in \cite{Ols} to define a new order $\preceq_s$ for the effect algebra $\mathcal E(H)$ ($s$ stands for spectral) in order to be $\mathcal E(H)$ a complete lattice such that for orthogonal projections it coincides with the standard order of operators. This new order was re-introduced for $\mathcal E(H)$ in \cite{dGr}.

Inspiring by \cite{Ols,dGr}, we write that two observables $x$ and $y$ on a monotone $\sigma$-complete effect algebra $E$ are in a relation
$$
x \preceq_s y \quad \Leftrightarrow \quad  y((-\infty,t))\le x((-\infty, t)) \ \mbox{for each} \ t \in \mathbb R,
\eqno(3.6)
$$
and the relation $\preceq_s$ is a partial order on $\mathcal O(E)$. We will call the order $\preceq_s$ also the {\it Olson order} or the     {\it spectral order}.

From Theorem \ref{th:2.2} we have that $\preceq_s$ is a partial order on the set of all observables $\mathcal O(E)$ of a $\sigma$-lattice effect algebra $E$. We denote by $\mathcal{EO}(E)$ the class of observables $x$ such that $\sigma(x)\subseteq [0,1]$, and we denote by $\mathcal O_b(E)$ the class of bounded observables on $E$.

\begin{proposition}\label{pr:2.2}
Let $a$ and $b$ be elements of a monotone $\sigma$-lattice effect algebra $E$. Then $q_a\preceq_s q_b$ if and only if $a\le b$. In addition, for every $x\in \mathcal{EO}(E)$, we have $q_0 \preceq_s x\preceq_s q_1$.
\end{proposition}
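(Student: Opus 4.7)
The plan is to read off both sides of (3.6) directly from the explicit spectral resolution formula (3.5) and reduce everything to a case analysis in three pieces: $t\le 0$, $0<t\le 1$, and $t>1$.

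For the first assertion I would write down, using (3.5), the spectral resolutions of $q_a$ and $q_b$ in parallel. By definition (3.6), $q_a\preceq_s q_b$ means $q_b((-\infty,t))\le q_a((-\infty,t))$ for every real $t$. In the ranges $t\le 0$ and $t>1$, both values coincide ($0$ and $1$ respectively), so the inequality is automatic. The only nontrivial range is $0<t\le 1$, where the inequality becomes $b'\le a'$. Since in every effect algebra $x\le y$ is equivalent to $y'\le x'$, this is the same as $a\le b$, and this gives both implications at once.

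For the second assertion, fix $x\in \mathcal{EO}(E)$, i.e.\ $\sigma(x)\subseteq[0,1]$. Using (3.5) for $q_0$ and $q_1$, one obtains the explicit descriptions: $q_0((-\infty,t))$ is $0$ for $t\le 0$ and $1$ for $t>0$, while $q_1((-\infty,t))$ is $0$ for $t\le 1$ and $1$ for $t>1$. To verify $q_0\preceq_s x$, the only nontrivial case is $t\le 0$, for which I would argue that $(-\infty,t)$ is disjoint from $\sigma(x)$, hence contained in the Borel complement of $\sigma(x)$; since $x(\sigma(x))=1$ forces $x(\mathbb R\setminus\sigma(x))=0$, monotonicity of $x$ yields $x((-\infty,t))=0$, which is all that is needed. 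The inequality $x\preceq_s q_1$ is symmetric: only $t>1$ matters, and then $\sigma(x)\subseteq[0,1]\subseteq(-\infty,t)$ gives $x((-\infty,t))\ge x(\sigma(x))=1$.

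I do not foresee a real obstacle: the whole statement is a direct unpacking of the definitions, relying only on the involution identity $x\le y\Leftrightarrow y'\le x'$ in effect algebras and on the monotonicity of observables. If there is any subtlety, it would be the careful handling of the boundary point $t=1$ in the first part (where strict versus non-strict inequality in the defining intervals $(-\infty,t)$ could be confused with the endpoint behaviour $0<t\le 1$ vs $1<t$ in (3.5)); aligning these conventions from (3.5) and (3.6) consistently is the only bookkeeping point to watch.
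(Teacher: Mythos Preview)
Your argument is correct and follows the same route as the paper: the paper's proof is essentially a one-line reference to the explicit spectral resolution (3.5) together with the formulas for $q_0$ and $q_1$, and your case analysis is precisely the unpacking of that reference. Your treatment of the second assertion via $\sigma(x)\subseteq[0,1]$ and monotonicity of $x$ is slightly more explicit than the paper's, but amounts to the same thing.
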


\begin{proof}
The statement follows from (3.4). Moreoverf, $q_0((-\infty,t))=0$ if $t \le 0$ otherwise $q_0((-\infty,t))=1$, and $q_1((-\infty,t))= 0$ if $t\le 1$ otherwise $q_1((-\infty,t))=1$.
\end{proof}

If $x \in \mathcal{O}_b(E)$, then for its spectral resolution $\{x_t: t \in \mathbb R\}$, there are $s_0,t_0$ such that $x_t = 0$ for $t \le s_0$ and $x_t = 1$ for $t\ge t_0$.

For complete lattice effect algebras we are going to establish lattice properties of $\mathcal O_b(E)$ as follows.

\begin{lemma}\label{le:2.3}
Let $\{x_\alpha: \alpha \in A\}$ be a system of bounded observables on a complete lattice effect algebra $E$ such that there is a bounded observable $y$ on $E$ which is a lower bound of $\{x_\alpha: \alpha \in A\}$. Define
$$
x(t):= \bigvee_\alpha x_\alpha((-\infty,t)),\ t \in \mathbb R. \eqno(3.7)
$$
Then the system $\{x(t): t \in \mathbb R\}$ satisfies {\rm (3.2)--(3.4)} and it determines a unique bounded observable $x$ on $E$ and this observable is the greatest lower bound of $\{x_\alpha: \alpha \in A\}$ under the Olson order $\preceq_s$ in $\mathcal O_b(E)$, and we write $x = \bigwedge_\alpha x_\alpha$.
\end{lemma}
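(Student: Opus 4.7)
The plan is to invoke Theorem \ref{th:2.2}: if I can show that the family $\{x(t) : t \in \mathbb R\}$ defined by (3.7) satisfies conditions (3.2)--(3.4), then it is the spectral resolution of a unique observable $x$, and it remains to check that $x$ is bounded and serves as the greatest lower bound of $\{x_\alpha\}$ with respect to $\preceq_s$. The conceptual point to keep track of throughout is the reversal in (3.6): a large join of left-tail values corresponds to a \emph{small} observable in the Olson order, so the pointwise supremum in (3.7) is expected to yield an infimum in $(\mathcal O_b(E), \preceq_s)$.

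Condition (3.2) is immediate from the monotonicity of each map $t \mapsto x_\alpha((-\infty, t))$: for $t < s$, each $x_\alpha((-\infty, t)) \le x_\alpha((-\infty, s)) \le x(s)$, so $x(s)$ is an upper bound of the family whose join is $x(t)$. For (3.4), since $E$ is a complete lattice the two joins may be interchanged, and then (3.4) applied to each $x_\alpha$ individually gives
\[
\bigvee_{t<s} x(t) \;=\; \bigvee_\alpha \bigvee_{t<s} x_\alpha((-\infty,t)) \;=\; \bigvee_\alpha x_\alpha((-\infty, s)) \;=\; x(s).
\]
The two halves of (3.3) have asymmetric proofs. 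For $\bigvee_t x(t) = 1$, fix any single index $\alpha_0$: since $x_{\alpha_0}$ is bounded, there is $t_0$ with $x_{\alpha_0}((-\infty, t_0)) = 1$, and then $x(t_0) \ge x_{\alpha_0}((-\infty, t_0)) = 1$. The opposite half $\bigwedge_t x(t) = 0$ is precisely where the hypothesis on $y$ enters: $y \preceq_s x_\alpha$ forces $x_\alpha((-\infty, t)) \le y((-\infty, t))$ for every $\alpha$, whence $x(t) \le y((-\infty, t))$; boundedness of $y$ then supplies $s_0$ with $y((-\infty, s_0)) = 0$, so $x(s_0) = 0$.

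With (3.2)--(3.4) in hand, Theorem \ref{th:2.2} furnishes a unique observable $x$ with $x((-\infty, t)) = x(t)$, and the computations above show $x((-\infty, t)) = 0$ for $t \le s_0$ and $x((-\infty, t_0)) = 1$, giving $x([s_0, t_0]) = 1$ and hence boundedness. The order conclusion then follows directly from definition (3.6). By construction $x_\alpha((-\infty, t)) \le x((-\infty, t))$ for every $\alpha$ and $t$, so $x \preceq_s x_\alpha$ for all $\alpha$, making $x$ a lower bound. If $z \in \mathcal O_b(E)$ is any other lower bound, then $x_\alpha((-\infty, t)) \le z((-\infty, t))$ for all $\alpha, t$; taking the supremum over $\alpha$ yields $x((-\infty, t)) \le z((-\infty, t))$, that is, $z \preceq_s x$.

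The main obstacle is conceptual rather than technical: one must remain careful about the orientation reversal between $\preceq_s$ on observables and $\le$ on the values of their left-continuous resolutions, and one must be alert to the asymmetric role of the lower-bound hypothesis on $y$. Without $y$, the clause $\bigwedge_t x(t) = 0$ can fail---a family of bounded observables whose spectra drift toward $-\infty$ need not admit a bounded infimum---so the hypothesis is essential precisely at this single step, and not anywhere else in the argument.
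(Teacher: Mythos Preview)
Your proof is correct and follows essentially the same approach as the paper's: verify (3.2)--(3.4) directly (using the interchange of joins for (3.4), the lower bound $y$ for $\bigwedge_t x(t)=0$, and a single $x_{\alpha_0}$ for $\bigvee_t x(t)=1$), invoke Theorem~\ref{th:2.2}, then check boundedness and the greatest-lower-bound property from the definition of $\preceq_s$. Your attribution of which inequality yields which half of the boundedness is in fact clearer than the paper's wording, and your closing remark on the necessity of $y$ is a welcome addition.
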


\begin{proof}
Assume $y \preceq_s x_\alpha$, $\alpha \in A$. We set $x_\alpha(t)=x_\alpha((-\infty,t))$ and $y(t)=y((-\infty,t))$. Then $x_\alpha(t)\le y(t)$ for each $t \in \mathbb R$ and each $\alpha \in A$. Define $x(t)$ by (3.7). We verify (3.2)--(3.4). Trivially $x(t) \le x(s)$ if $t\le s$. We have (i) $\bigvee_t \bigvee_\alpha x_\alpha(t)= \bigvee_\alpha \bigvee_t x_\alpha(t) =1$, (ii) $0\le \bigwedge_t \bigvee_\alpha x_\alpha(t) \le \bigwedge_t y(t)=0$, and (iii) $\bigvee_{t<s} x_t= \bigvee_{t<s}\bigvee_\alpha x_\alpha(t) = \bigvee_\alpha \bigvee_t x_\alpha(t)= \bigvee_\alpha x_\alpha(s)=x(s)$.

Let $x$ be a unique observable of $E$ for which $\{x(t): t \in \mathbb R\}$ is the spectral resolution. From the construction of $x$ we have $x(t)\ge x_\alpha(t)$ for each $\alpha\in A$ and each $t\in \mathbb R$. This implies $x\preceq_s x_\alpha$ for each $\alpha \in A$. Since $x(t) \le y(t)$, there is $t_0\in \mathbb R$ such that $x(t)=1$ for each $t\ge t_0$, and $x(t)\ge x_\alpha(t)$, there is $s_0\in \mathbb R$ such that $x(t)=0$ for $t<s_0$, showing $x$ is a bounded observable.

Finally, let $z$ be a bounded observable on $E$ such that $z\preceq_s x_\alpha$ for each $\alpha \in A$. Then $x_\alpha(t)\le z(t)$ for each $\alpha\in A$, i.e. $x(t)\le z(t)$, $t \in \mathbb R$. Hence, $z\preceq_s x$.
\end{proof}

\begin{proposition}\label{pr:2.4}
Let $\{x_t: t \in \mathbb R\}$ be a system of elements of a $\sigma$-lattice effect algebra $E$ satisfying {\rm (3.2)--(3.3)}. Define
$$
x_l(t):=\bigvee_{u<t}x(u), \ t \in \mathbb R. \eqno(3.8)
$$
Then $x_l(t)$ is defined in $E$ for each $t$, and the system $\{x_l(t): t \in \mathbb R\}$ satisfies {\rm (3.2)--(3.4)}.

Similarly, if we define
$$
x_r(t):= \bigwedge_{u>t} x(u), \ t \in \mathbb R, \eqno(3.9)
$$
then $x_r(t)$ exists in $E$ for each $t$, and the system $\{x_r(t): t \in \mathbb R\}$ satisfies {\rm (3.2)--(3.3)}  and $\bigwedge_{t>s}x_r(t)=x_r(s)$, $s \in \mathbb R$.

In addition, if there are $s_0$, $t_0$ such that $x(t)=0$ if $t<s_0$ and $x(t)=1$, if $t>t_0$, then $x_l(t)=x_r(t)=0$ if $t<s_0$ and $x_r(t)=x_l(t)=1$ if $t>t_0$.
\end{proposition}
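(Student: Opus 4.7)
My plan is to address, in order: well-definedness of $x_l(t)$ and $x_r(t)$ in $E$, monotonicity, the endpoint conditions (3.3), the one-sided continuity conditions, and finally the boundedness clause. The overarching strategy is to exploit monotonicity of the given family $\{x(u)\}$ together with density of $\mathbb R$ to reduce every supremum or infimum needed to a countable one, which is exactly what the $\sigma$-lattice hypothesis on $E$ provides.

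For well-definedness, although $\{u : u < t\}$ is uncountable, (3.2) makes the sequence $x(t - 1/n)$ cofinal in $\{x(u) : u < t\}$, so $x_l(t) = \bigvee_n x(t - 1/n)$ exists in $E$; dually $x_r(t) = \bigwedge_n x(t + 1/n)$ exists. Monotonicity of both families is then immediate from inclusion of index sets. For (3.3) on $x_l$, the inequality $x_l(t) \le x(t)$ (since $x(t)$ is an upper bound of $\{x(u) : u < t\}$) gives $\bigwedge_t x_l(t) \le \bigwedge_t x(t) = 0$; and for any $s$, choosing any $t > s$ yields $x(s) \le x_l(t)$, whence $\bigvee_t x_l(t) \ge \bigvee_s x(s) = 1$. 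The symmetric argument using $x(t) \le x_r(t)$ settles (3.3) for $x_r$.

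For the left continuity (3.4) of $\{x_l(t)\}$, I would use the set identity $\{u : u < s\} = \bigcup_{t<s}\{u : u < t\}$, which holds by density of $\mathbb R$, to swap iterated suprema:
$$ \bigvee_{t<s} x_l(t) \;=\; \bigvee_{t<s}\bigvee_{u<t} x(u) \;=\; \bigvee_{u<s} x(u) \;=\; x_l(s). $$
The dual computation with infima and the identity $\{u : u > s\} = \bigcup_{t>s}\{u : u > t\}$ yields $\bigwedge_{t>s} x_r(t) = x_r(s)$. The boundedness clause is then routine: if $x(t) = 0$ for $t < s_0$, then for any such $t$, every $u < t$ satisfies $x(u) = 0$, giving $x_l(t) = 0$; and one can always choose $u$ with $t < u < s_0$, so $x_r(t) \le x(u) = 0$. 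The dual argument at $t > t_0$ using $x(t) = 1$ handles the top end.

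The main obstacle is really the existence step: one must avoid implicitly invoking uncountable completeness of $E$ since only a $\sigma$-lattice is assumed, so the reduction to cofinal monotone sequences via (3.2) is genuinely needed before any of the subsequent supremum-swapping identities can be justified. Once that is in place, everything else is a straightforward manipulation of monotone families.
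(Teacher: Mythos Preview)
Your proof is correct and follows essentially the same approach as the paper's: reduce the uncountable suprema/infima to countable ones via monotonicity (you use the cofinal sequences $t\pm 1/n$, the paper uses density of $\mathbb{Q}$, which amounts to the same thing), then verify (3.2)--(3.4) by bounding $x_l(t)\le x(t)\le x_r(t)$ and swapping iterated suprema/infima. The only difference is cosmetic.
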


\begin{proof}
(1)
Since $E$ is a $\sigma$-lattice, the element $x_l'(t)=\bigvee\{ x(u):u<t, u \in \mathbb Q\}$ exists in $E$. Due to the density of $\mathbb Q$ in $\mathbb R$, for each $u\in \mathbb R$ with $u<t$, there are two rational numbers $p$ and $q$ such that $p<u<q<t$. Hence, using the monotonicity of $\{x(t): t \in \mathbb R\}$, $x_l(t)$ exists in $E$ and $x_l(t)=x'_l(t)$.

We have:

(i) $x_l(t)\le x_l(s)$ if $t<s$, (ii) $0\le \bigwedge_t x_l(t) = \bigwedge_t \bigvee_{u<t} x(u)\le \bigwedge_{t<0}\bigvee_{u<t}x(u) \le \bigwedge_{t<0}x(t)=0$,  $1\ge \bigvee_t x_l(t)= \bigvee_t\bigvee_{u<t} x(u) \ge \bigvee_{t>0} x(t/2)=1$. (iii) $\bigvee_{t<s}x_l(t)= \bigvee_{t<s}\bigvee_{u<t}x(u) = \bigvee_{u<t}\bigvee_{t<s} x(u)= \bigvee_{u<s}x(u)=x_l(s)$.

(2) In the same way as for (1), we prove that $x_r(t)$ is defined in $E$ for each $t$. Check:

(i) $x_r(t)\le x_r(s)$ if $t<s$, (ii) $0\le \bigwedge_t x_r(t) = \bigwedge_t \bigwedge_{u>t} x(u) \le \bigwedge_{t<0} x(2t)$, (ii) $1\ge \bigvee_t x_r(t) = \bigvee_t \bigwedge_{u>t} x(u) \ge \bigvee_{t>0} x(t/2)=1$. (iii) $\bigwedge_{t>s}x_r(t)=\bigwedge_{t>s}\bigwedge_{u>t}x(u) =
\bigwedge_{u>t}\bigwedge_{t>s} x(u) = \bigwedge_{u>s}x(u)=x_r(s)$.

The rest is clear from (3.8) and (3.9).
\end{proof}

We note that $\{x_l(t): t \in \mathbb R\}$ and $\{x_r(t): t \in \mathbb R\}$ defined by (3.8) and (3.9)
are said to be the {\it left-regularization} and {\it right-regularization}, respectively, of the system $\{x(t): t \in \mathbb R\}$ satisfying (3.2)--(3.3).

\begin{lemma}\label{le:2.5}
Let $\{x_\alpha: \alpha \in A\}$ be a system of bounded observables on a complete lattice effect algebra $E$ such that there is a bounded observable $y$ on $E$ which is an upper bound of $\{x_\alpha: \alpha \in A\}$. Define
$$
x(t):= \bigvee_{u<t} \bigwedge_\alpha x_\alpha((-\infty,u)),\ t \in \mathbb R. \eqno(3.10)
$$
Then the system $\{x(t): t \in \mathbb R\}$ satisfies {\rm (3.2)--(3.4)}  and it determines a unique bounded observable $x$ on $E$ and this observable is the least upper bound of $\{x_\alpha: \alpha \in A\}$ under the Olson order $\preceq_s$ in $\mathcal O_b(E)$, and we write $x = \bigvee_\alpha x_\alpha$.
\end{lemma}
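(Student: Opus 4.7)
The plan is to follow the pattern of Lemma \ref{le:2.3}, with one extra technical wrinkle forced by the direction of the infimum. Set $g(u) := \bigwedge_\alpha x_\alpha((-\infty,u))$, so that $(3.10)$ reads $x(t) = \bigvee_{u<t} g(u)$; that is, $x$ is precisely the left-regularization of $g$ in the sense of Proposition \ref{pr:2.4}. The strategy is therefore to check that $g$ satisfies $(3.2)$--$(3.3)$, invoke Proposition \ref{pr:2.4} to promote it to a system satisfying $(3.2)$--$(3.4)$, use Theorem \ref{th:2.2} to obtain the corresponding observable $x$, and then verify the ordering properties.

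First, monotonicity of $g$ is inherited from each $x_\alpha$, giving $(3.2)$. For $(3.3)$, I would fix any $\alpha_0 \in A$: since $g(u) \le x_{\alpha_0}(u)$, we get $\bigwedge_u g(u) \le \bigwedge_u x_{\alpha_0}(u) = 0$. For the other direction, the hypothesis that $y$ is an Olson upper bound means $y((-\infty,u)) \le x_\alpha((-\infty,u))$ for every $\alpha,u$, so $y(u) \le g(u)$; since $y$ is bounded, there is $t_0$ with $y(u)=1$ for $u>t_0$, hence $g(u)=1$ for such $u$ and $\bigvee_u g(u) = 1$. Boundedness from the left is immediate from $g(u) \le x_{\alpha_0}(u)$. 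Proposition \ref{pr:2.4} then gives that $\{x(t): t\in \mathbb R\}$ satisfies $(3.2)$--$(3.4)$ and, by its final clause, still vanishes for $t$ below the left endpoint of $x_{\alpha_0}$ and equals $1$ for $t > t_0$, so Theorem \ref{th:2.2} produces a unique bounded observable $x$ with this spectral resolution.

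For the upper-bound property, note that for each $\alpha$ and each $t$, every term in the supremum defining $x(t)$ satisfies $g(u) \le x_\alpha(u) \le x_\alpha(t)$, hence $x(t) \le x_\alpha(t)$, which is $x_\alpha \preceq_s x$. For minimality, suppose $z \in \mathcal O_b(E)$ satisfies $x_\alpha \preceq_s z$ for every $\alpha$, i.e.\ $z(t) \le x_\alpha(t)$ for all $t$ and $\alpha$; then $z(t) \le g(t)$. The decisive point is that $z$ itself, being an observable, satisfies the left-continuity condition $(3.4)$, so $z(t) = \bigvee_{u<t} z(u) \le \bigvee_{u<t} g(u) = x(t)$, yielding $x \preceq_s z$.

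The main conceptual obstacle to avoid is the temptation to take $g$ directly as the spectral resolution of the candidate supremum: a pointwise infimum of left-continuous spectral resolutions need not be left-continuous, so $g$ may fail $(3.4)$. This is precisely why Proposition \ref{pr:2.4} was prepared in advance; and the left-continuity of any competing observable $z$ is exactly what guarantees that replacing $g$ by its left-regularization loses no information in the minimality step.
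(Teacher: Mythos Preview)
Your proof is correct and follows essentially the same route as the paper's: define $x^0(t)=\bigwedge_\alpha x_\alpha((-\infty,t))$ (your $g$), verify it satisfies (3.2)--(3.3) using the upper bound $y$, pass to its left-regularization via Proposition~\ref{pr:2.4}, and then check the upper-bound and minimality properties exactly as you do, with the left-continuity (3.4) of a competing $z$ being the key in the last step. Your closing remark about why the regularization is necessary is a welcome piece of exposition that the paper leaves implicit.
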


\begin{proof}
We set $x_\alpha(t)=x_\alpha((-\infty,t))$ and $y(t)=y((-\infty,t))$.
Since $x_\alpha \preceq_s y$, we have $y(t)\le x_\alpha(t)$. We define
$$
x^0(t):= \bigwedge_\alpha x_\alpha(t),\ t \in \mathbb R.
$$

We verify that $\{x^0(t): t \in \mathbb R\}$ satisfies conditions (3.2) and (3.3). (i) Clearly $x^0(s)\le x^0(t)$ if $s<t$. (ii) Check
$\bigwedge_t x^0(t) = \bigwedge_t \bigwedge_\alpha x_\alpha(t) = \bigwedge_\alpha \bigwedge_t x_\alpha(t)=0$. (iii) $1\ge \bigvee_t x^0(t) = \bigvee_t \bigwedge_\alpha x_\alpha(t) \ge \bigvee_t y(t)= 1$.

Let $\{x(t): t \in \mathbb R\}$ be the the left-regularization of $\{x^0(t): t \in \mathbb R\}$, i.e. $x(t)=x^0_l(t)$, $t \in \mathbb R$. By Proposition \ref{pr:2.4}, the system $\{x(t): t \in \mathbb R\}$ satisfies (3.2)--(3.4), so it defines a unique bounded observable $x$ on $E$ such that $x((-\infty,t))=x(t)$, $t \in \mathbb R$. Then $x(t)= \bigvee_{u<t}\bigwedge_\alpha x_\alpha(u) \le \bigvee_{u<t} x_\alpha(u) = x_\alpha(t)$ for each $\alpha \in A$, i.e. $x_\alpha \preceq_s x$ for each $\alpha \in A$. Now let $z$ be a bounded observable on $E$ such that $x_\alpha \preceq_s z$, $\alpha \in A$. Whence $z(u) \le x_\alpha(u)$, $u \in \mathbb R$, i.e. $z(t)= \bigvee_{u<t}z(u) \le \bigvee_{u<t} \bigwedge_\alpha x_\alpha(u) = x(t)$, $t \in \mathbb R$, which yields $x\preceq_ s  z$. Finally, $x = \bigvee_\alpha x_\alpha$.
\end{proof}

We say that a poset $(P; \leqslant )$ is {\it Dedekind complete} if (i) for each family $\{x_\alpha: \alpha \in A\}$ of elements of $P$ which is bounded from below, there is $\bigwedge_\alpha x_\alpha$ in $P$ and  for each family $\{y_\beta: \beta \in B\}$ of elements of $P$ which is bounded from above, there is $\bigvee_\beta y_\beta$ in $P$. In an analogous way we introduce a Dedekind $\sigma$-complete lattice, if instead of the arbitrary set $A$, we take only a countable set $A$.

\begin{theorem}\label{th:2.6}
The set $\mathcal O_b(E)$ of bounded observables of a complete lattice effect algebra $E$ is a Dedekind complete lattice under the Olson order.

The set $\mathcal{EO}(E)$ of observables whose spectra are in the interval $[0,1]$ is a complete lattice under the Olson order, and $q_0$ and $q_1$ are the bottom and top elements of $\mathcal {EO}(E)$.
\end{theorem}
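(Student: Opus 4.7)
The plan is to derive the theorem almost directly from the two preparatory lemmas, treating part (1) as an immediate corollary and spending the real work on checking that part (2) does not leak out of $\mathcal{EO}(E)$.

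For part (1), I would point out that Dedekind completeness is precisely the combined content of Lemma \ref{le:2.3} (which produces $\bigwedge_\alpha x_\alpha$ from any family bounded below in $\mathcal{O}_b(E)$) and Lemma \ref{le:2.5} (which produces $\bigvee_\alpha x_\alpha$ from any family bounded above in $\mathcal{O}_b(E)$). Nothing further needs to be said beyond citing these lemmas.

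For part (2), the first step is to use Proposition \ref{pr:2.2}: every $x \in \mathcal{EO}(E)$ satisfies $q_0 \preceq_s x \preceq_s q_1$, and $q_0,q_1 \in \mathcal{O}_b(E)$ trivially. Hence any family $\{x_\alpha\}\subseteq \mathcal{EO}(E)$ is bounded both above and below in $\mathcal{O}_b(E)$, so $\bigwedge_\alpha x_\alpha$ and $\bigvee_\alpha x_\alpha$ exist inside $\mathcal{O}_b(E)$ by part (1). The only nontrivial step is therefore closure: I need to show these meets and joins actually lie back in $\mathcal{EO}(E)$. I would first translate the condition $\sigma(x)\subseteq[0,1]$ into a statement about the spectral resolution, namely $x_t=0$ for $t\le 0$ and $x_t=1$ for $t>1$ (one direction uses $(-\infty,t)\cap[0,1]=\emptyset$ for $t\le 0$, the other uses $x((-\infty,1])=\bigwedge_{t>1}x_t=1$ together with $x((1,\infty))=1-x((-\infty,1])$).

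With this characterization in hand, closure becomes routine. For the meet built by formula (3.7), $x(t)=\bigvee_\alpha x_\alpha(t)$ inherits the value $0$ for $t\le 0$ and $1$ for $t>1$ termwise. For the join built by formula (3.10), $x(t)=\bigvee_{u<t}\bigwedge_\alpha x_\alpha(u)$ is $0$ for $t\le 0$ because every $u<t$ lies in $(-\infty,0]$, and equals $1$ for $t>1$ because some rational $u$ with $1<u<t$ makes $\bigwedge_\alpha x_\alpha(u)=1$. Hence both operations stay inside $\mathcal{EO}(E)$, and Proposition \ref{pr:2.2} already identifies $q_0$ and $q_1$ as the bottom and top elements. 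The main (and only slightly delicate) obstacle is this closure verification; the rest is a direct assembly of the previous lemmas.
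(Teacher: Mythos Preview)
Your proof is correct and follows the paper's approach exactly: Lemma \ref{le:2.3} and Lemma \ref{le:2.5} for part (1), then Proposition \ref{pr:2.2} for part (2). The paper's own proof is two lines and does not spell out the closure argument you supply; your check is fine, but it can be shortened by noting (from (3.5)) that $q_0\preceq_s x\preceq_s q_1$ already forces $x_t=0$ for $t\le 0$ and $x_t=1$ for $t>1$, so $\mathcal{EO}(E)$ is precisely the order interval $[q_0,q_1]$ in $\mathcal O_b(E)$ and closure under infima and suprema is automatic.
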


\begin{proof}
The first statement follows from Lemma \ref{le:2.3}, where $x =\bigwedge_\alpha x_\alpha$ is defined by (3.7), and from Lemma \ref{le:2.5}, where $x=\bigvee_\alpha x_\alpha$ is defined by (3.10).

The second statement on $\mathcal{EO}(E)$ follows from the first part and Proposition \ref{pr:2.2}.
\end{proof}

If $E$ is a $\sigma$-complete effect algebra, it could happen that the elements (3.7) and (3.10) do not exist in $E$ if $A$ is not countable. However, for a countable set $A$, we can literally repeat formulas (3.7) and (3.10), so we have the following result.

\begin{theorem}\label{th:2.7}
The set $\mathcal O_b(E)$ of bounded observables of a $\sigma$-lattice effect algebra $E$ is a Dedekind $\sigma$-complete lattice under the Olson order.

The set $\mathcal{EO}(E)$ of observables whose spectra are in the interval $[0,1]$ is a complete lattice under the Olson order, and $q_0$ and $q_1$ are the bottom and top elements of $\mathcal {EO}(E)$.
\end{theorem}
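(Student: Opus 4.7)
The plan is to essentially repeat the arguments of Lemma \ref{le:2.3} and Lemma \ref{le:2.5} with a countable index set $A$, checking at each step that every supremum or infimum appearing in the construction and in the verification of conditions (3.2)--(3.4) can be realized as a countable join or meet in $E$. Since $E$ is a $\sigma$-lattice, such countable joins and meets exist, so no appeal to full completeness is needed.

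First I would treat the infimum. Given a countable family $\{x_n : n \in \mathbb N\}\subseteq \mathcal O_b(E)$ that is bounded below by a bounded observable $y$, set $x(t) := \bigvee_n x_n((-\infty,t))$; this is a countable join, so exists in $E$. To verify (3.2)--(3.4) I would mirror Lemma \ref{le:2.3}: monotonicity is immediate; the top identity in (3.3) reads $\bigvee_t x(t)\ge \bigvee_n\bigvee_t x_n((-\infty,t))=\bigvee_n 1=1$, which is a countable join; the bottom identity reduces to a countable meet via $\bigwedge_t x(t)\le \bigwedge_t y((-\infty,t))=\bigwedge_k y((-\infty,-k))=0$, using that $y$ is bounded so that the integer sequence $-k$ is cofinal in $(-\infty,s_0)$; and the join in (3.4) can be restricted to rational $t$ by density of $\mathbb Q$, turning it into a countable double join whose two orders can be swapped. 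Boundedness of $x$ then follows from the inequalities with $x_1$ and $y$.

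For the supremum, given a countable family bounded above by a bounded observable $y$, I would first form the countable meet $x^0(t):=\bigwedge_n x_n((-\infty,t))$, which exists in a $\sigma$-lattice, and verify (3.2) and (3.3) for $x^0$ exactly as in Lemma \ref{le:2.5}. Then apply the left-regularization of Proposition \ref{pr:2.4}, setting $x(t)=x^0_l(t)=\bigvee_{u<t,\ u\in\mathbb Q} x^0(u)$; this is again only a countable join, which is precisely why Proposition \ref{pr:2.4} was stated already in the $\sigma$-lattice setting. The verifications that $x$ is the least upper bound under $\preceq_s$ and that $x$ is bounded go through verbatim as in Lemma \ref{le:2.5}.

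The main technical point, and the one that warrants most care, is ensuring that every join or meet invoked — particularly in (3.3), (3.4), and the regularization — genuinely reduces to a countable operation: this relies on (a) density of $\mathbb Q$, which lets the monotone joins in (3.4) and in $x_l$ be taken over rationals, and (b) boundedness of the involved observables, which lets the limits $t\to\pm\infty$ in (3.3) be computed along integer sequences $t=\pm k$. For the second statement, Proposition \ref{pr:2.2} provides $q_0$ and $q_1$ as global lower and upper bounds inside $\mathcal{EO}(E)$, so every (countable) subfamily is bounded above and below, the first part of the theorem applies, and one checks that the constructed infimum and supremum again have spectrum contained in $[0,1]$, giving the required $\sigma$-completeness of $\mathcal{EO}(E)$ with $q_0$ and $q_1$ as bottom and top.
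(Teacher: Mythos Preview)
Your proposal is correct and follows the same route as the paper: the paper's entire proof is the sentence preceding Theorem \ref{th:2.7}, namely that for a countable index set one ``can literally repeat formulas (3.7) and (3.10)'', together with the reference to Proposition \ref{pr:2.2} for $\mathcal{EO}(E)$. Your write-up is in fact more careful than the paper's, since you spell out why every join and meet involved (in (3.3), (3.4), and the left-regularization) reduces to a countable one via the density of $\mathbb Q$ and the boundedness hypotheses; note also that, as you implicitly observed, the word ``complete'' in the second clause should really read ``$\sigma$-complete''.
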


Because the Olson order was defined also for non-lattice effect algebras, we note that it could happen, that $x\wedge y$ exists for a non-lattice effect algebra. In what follows we show such a situation.

We remind that an observable $x$ is {\it simple} if $\sigma(x)$ is a finite non-empty set. If $t_1<\cdots <t_n$ and $a_i=x(\{t_i\})$, $i=1,\ldots,n$, then $a_1+\cdots+a_n=1$, and for the spectral resolution of $x$, we have

$$
x((-\infty, t))= \left\{\begin{array}{ll} 0 & \mbox{if} \ t\le t_1,\\
a_1+\cdots + a_i & \mbox{if}\ t_i< t\le t_{i+1},\ i=1,\ldots,n-1,\\
1 & \mbox{if}\ t_n<t
\end{array}
\right.
\eqno(3.11)
$$
for $t \in \mathbb R$, and $\sigma(x)\subseteq \{t_1,\ldots,t_n\}$. Every question observable is a simple observable.

We note that if $x$ and $y$ are two simple observables, then there are a finite set $\{t_1,\ldots,t_n\}$ of real numbers with $t_1<\cdots< t_n$, two finite sequences of elements of $E$ $a_1,\ldots,a_n$ and $b_1,\ldots, b_n$ with $a_1+\cdots + a_n=1=b_1+\cdots +b_n$  such that $x(\{t_i\})=a_i$ and $y(\{t_i\})=b_i$ for each $i=1,\ldots,n$. Indeed, if $\sigma(x)=\{u_1,\ldots,u_k\}$ and $\sigma(y)=\{v_1,\ldots, u_l\}$, then $\sigma(x)\cup \sigma(y)= \{t_1,\ldots, t_n\}$ for some $t_1<\dots<t_n$. If we set $a_i=x(\{t_i\})$ and $b_i=y(\{t_i\})$ for $1\le i \le n$, then $a_1+\cdots +a_n=1=b_1+\cdots + b_n$, and $\sigma(x),\sigma(y)\subseteq \{t_1,\ldots,t_n\}$.

\begin{example}\label{ex:2.8} {\rm (1)} Let $x$ and $y$ be two simple observables on a monotone $\sigma$-complete effect algebra $E$ such that there is a finite sets of real numbers $t_1<t_2<\cdots <t_n$, such that $\sigma(x), \sigma(y)\subseteq\{t_1,\ldots,t_n\}$. Set $a_i = x(\{t_i\})$, $b_i=y(\{t_i\})$ for $i=1,\ldots, n$. Then $x\wedge y$ exists in $\mathcal O_b(E)$ if and only if
$d_i:=(a_1+\cdots +a_i)\vee (b_1+\cdots+b_i)$ exists in $E$ for each $i=1,\ldots,n$, and in such a case, $x\wedge y = z$, where $z(\{t_1\})= d_1$, $z(\{t_i\})= d_{i+1}-d_i$ for $i=1,\ldots,n-1$, and $z(\{t_n\})= 1-d_n$.

{\rm (2)} If $q_a$ and $q_b$ are two question observables on a monotone $\sigma$-complete effect algebra $E$, then $q_a\wedge q_b$ exists iff $a\wedge b$ exists in $E$, and in such a case, $q_a\wedge q_b=q_{a\wedge b}$.
\end{example}

\begin{proof}
(1) Let $z=x\wedge y$ is defined. Then $x(t), y(t)\le z(t)$ for each $t$. Define elements $c_i = z(t_{i+1})$, $c_i = z(t_{i+1}) -z(t_i)$ for $i=2,\ldots, n-1$ and $c_n = 1 -z(t_n)$. Then $c_1+\cdots +c_n=1$. Define an observable $u$ such that $u(\{t_i\})=c_i$, $i=1,\ldots, n$. Using (3.11), we see that $u\preceq_s x,y$, which gets $u\preceq_s z$. We set $a^i:=a_1+\cdots+a_i$ and $b^i:=b_1+\cdots +b_i$, $i=1,\ldots,n$. We assert that $u(t_{i+1})=z(t_{i+1})$ is the least upper bound for $a^i$ and $b^i$ for $i=1,\ldots,n-1$. Of course, $a^i,b^i \le u(t_{i+1})=c_1+\cdots +c_i$ for $i=1,\ldots,n-1$, and $a^n=1=b^n=a_1+\cdots+c_n$. Now let $d$ be an upper bound for $a^i,b^i$ where $i$ is a fixed index such that $i=1,\ldots,n-1$. Define an observable $v$ on $E$ such that $v(t)= 0$ if $t\le t_1$, $v(t)=d$ if $0< t \le t_{i}$, and $v(t)=1$ if $t>t_i$. Then $v\preceq_s x,y$ which yields $v \preceq_s z$, so that $u(t_{i+1})=z(t_{i+1})\le d$, which proves $a^i\vee b^i=z(t_{i+1})$ for $i=1,\ldots, n-1$.

Suppose the converse, i.e. $d^i:=a^i\vee b^i$ exists in $E$ for each $i=1,\ldots,n$. Define an observable $z$ such that $z(t)=0$ if $t\le t_1$, $z(t)=d^i$ if $t_i<t\le t_{i+1}$, $i=1,\ldots, n-1$, and $z(t)=1$ if $t>t_n$. Then $v\preceq_s x,y$. Assume that $z$ is an arbitrary bounded observable on $E$ such that $z \preceq_s x,y$. Then $z(t)=0$, $z(t)\ge d^i$ if $t_i<t\le t_{i+1}$ and $z(t)=1$. Hence, $v(t)\le z(t)$ for each $t$, so that $z\preceq_s v$, which proves $v=x\wedge y$.

(2) It follows directly from (1).
\end{proof}

\section{An Equivalent Approach to the Olson Order and Hermitian Operators}

We note that Olson \cite{Ols} and later de Groote \cite{dGr} have defined the spectral order on the set $\mathcal E(H)$ in a little bit different way as we did. In what follows, we show that both approaches are the same. First we establish an analogue of Theorem \ref{th:2.2} which is important for a new approach.

\begin{theorem}\label{th:3.1}
Let $x$ be an observable on a $\sigma$-lattice effect algebra $E$. Given a real number $t \in \mathbb R,$ we put

$$ x_t := x((-\infty, t]). \eqno(4.1)
$$
Then

$$ x_t \le x_s \quad {\rm if} \ t < s, \eqno (4.2)$$

$$\bigwedge_t x_t = 0,\quad \bigvee_t x_t =1, \eqno(4.3)
$$
and
$$ \bigwedge_{s<t}x_t = x_s, \ s \in \mathbb R. \eqno(4.4)
$$

Conversely, if there is a system $\{x_t: t \in \mathbb R\}$ of elements of $E$ satisfying {\rm
(4.2)--(4.4)}, then there is a unique observable $x$ on $E$ for which $(4.1)$ holds for any $t \in \mathbb R.$
\end{theorem}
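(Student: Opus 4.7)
Proof plan for Theorem \ref{th:3.1}.

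\textbf{Forward direction.} Let $x$ be an observable and set $x_t := x((-\infty,t])$. For (4.2), monotonicity of $x$ on Borel sets gives $x_t\le x_s$ whenever $t<s$. For (4.3), I would apply the observable's continuity on monotone sequences: the sets $(-\infty,-n]$ decrease to $\emptyset$ and $(-\infty,n]$ increase to $\mathbb R$, so $\bigwedge_n x_{-n}=x(\emptyset)=0$ and $\bigvee_n x_n=x(\mathbb R)=1$, forcing $\bigwedge_t x_t=0$ and $\bigvee_t x_t=1$. For (4.4), note $(-\infty,s]=\bigcap_{n}(-\infty,s+1/n]$; by the observable's continuity on decreasing sequences, $x_s=\bigwedge_n x_{s+1/n}$. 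Since $x_s\le x_t$ for every $t>s$, one inequality is automatic, and the other follows from $\bigwedge_{t>s}x_t\le \bigwedge_n x_{s+1/n}=x_s$.

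\textbf{Converse direction --- reduction to Theorem~\ref{th:2.2}.} Given $\{x_t:t\in\mathbb R\}$ satisfying (4.2)--(4.4), the plan is to pass to the \emph{left-regularization}
\[
y_t := \bigvee_{s<t} x_s,\qquad t\in\mathbb R,
\]
and verify that $\{y_t\}$ satisfies (3.2)--(3.4). Existence of $y_t$ in the $\sigma$-lattice $E$ is guaranteed by restricting the join to rationals $s<t$ as in Proposition~\ref{pr:2.4}, since $\{x_t\}$ is monotone. Properties (3.2) and (3.3) are routine: $y_t\le x_t$ gives $\bigwedge_t y_t\le\bigwedge_t x_t=0$, and $y_t\ge x_{t-1}$ gives $\bigvee_t y_t\ge\bigvee_t x_{t-1}=1$; monotonicity of $y_t$ in $t$ is immediate. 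Property (3.4), $\bigvee_{r<s}y_r = y_s$, follows by interchanging suprema: $\bigvee_{r<s}\bigvee_{u<r}x_u=\bigvee_{u<s}x_u=y_s$.

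\textbf{Recovery of the observable.} By Theorem~\ref{th:2.2}, there is a unique observable $x$ on $E$ with $x((-\infty,t))=y_t$ for every $t$. It remains to check that $x((-\infty,t])=x_t$. Writing $(-\infty,t]=\bigcap_{n}(-\infty,t+1/n)$ and using observable continuity on decreasing sequences, $x((-\infty,t])=\bigwedge_n y_{t+1/n}$. The inequality $\bigwedge_n y_{t+1/n}\ge x_t$ holds because $y_{t+1/n}\ge x_t$ (take $s=t$ in the join defining $y_{t+1/n}$). The reverse inequality uses the key hypothesis (4.4): since $y_{t+1/n}\le x_{t+1/n}$ by (4.2), we get $\bigwedge_n y_{t+1/n}\le\bigwedge_n x_{t+1/n}=x_t$ by (4.4). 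Thus $x((-\infty,t])=x_t$, as desired.

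\textbf{Uniqueness.} If $x$ and $x'$ are two observables with $x((-\infty,t])=x'((-\infty,t])=x_t$ for every $t$, then
\[
x((-\infty,t))=\bigvee_{s<t}x((-\infty,s])=\bigvee_{s<t}x_s=\bigvee_{s<t}x'((-\infty,s])=x'((-\infty,t)),
\]
so the uniqueness statement in Theorem~\ref{th:2.2} forces $x=x'$. The main obstacle is the verification that $x((-\infty,t])=x_t$ after the Theorem~\ref{th:2.2} reconstruction; everything hinges on the right-continuity hypothesis (4.4), which is precisely what pins $x_t$ down from the open-interval values $y_t$.
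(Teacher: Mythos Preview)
Your argument is correct and follows essentially the same route as the paper: left-regularize the given system, invoke Theorem~\ref{th:2.2} to obtain an observable with prescribed open-interval values, and then use the right-continuity hypothesis (4.4) together with $y_{t+1/n}\le x_{t+1/n}$ to pin down $x((-\infty,t])=x_t$. The only minor difference is in the uniqueness step: the paper runs a Dynkin/$\pi$-system argument directly on the class $\{E:x(E)=x'(E)\}$, whereas you (more economically) compute $x((-\infty,t))=\bigvee_{s<t}x_s=x'((-\infty,t))$ and then appeal to the uniqueness clause of Theorem~\ref{th:2.2}; both are valid and amount to the same thing.
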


\begin{proof}
Theorem can be proved in an analogous way as \cite[Thm 3.2, Thm 3.5]{DvKu}. We present here another proof.

One direction of the proof is clear. For the another one, assume that a system $\{x(t): t \in \mathbb R\}$ of elements of $E$ satisfies conditions (4.2)--(4.4). According to Proposition \ref{pr:2.4}, we define the left-regularization $\{x_l(t): t \in \mathbb R\}$ of the system $\{x(t): t \in \mathbb R\}$. Due to Proposition \ref{pr:2.4}, $\{x_l(t): t \in \mathbb R\}$ satisfies conditions (3.2)--(3.4), so there is a unique observable $x^0$ on $E$ such that $x^0((-\infty, t))=x_l(t)$, $t \in \mathbb R$. Take the right-regularization $\{x_r^0(t): t \in \mathbb R\}$ of $\{x^0(t): t \in \mathbb R\}$, where $x^0(t)=x^0((-\infty, t))$, $t \in \mathbb R$.

We have $x^0_r(t)= \bigwedge_{u>t} x^0((-\infty,t))= x^0((-\infty, t])$, $t \in \mathbb R$, as well as, $x^0_r(t)= \bigwedge_{u>t} x^0((-\infty,t)) = \bigwedge_{u>t}\bigvee_{v<u} x(v) \le \bigwedge_{u>t} x(u) = x(t)$, so that $x^0((-\infty,t])\le x(t)$.

On the other hand, $x_r^0(t)= \bigwedge_{u>t}\bigvee_{v<u} x(v)$. Take $v_0$ such that $t<v_0<u$, then $x_r^0(t)= \bigwedge_{u>t}\bigvee_{v<u} x(v)\ge \bigwedge_{u>t}x(v_0) =x(v_0)\ge x(t)$. This entails $x^0((-\infty,t])=x(t)$, $t \in \mathbb R$.

We assert that $x^0$ is a unique observable on $E$ such that $x^0((-\infty,t])=x(t)$, $t \in \mathbb R$. Indeed, let $y$ is any observable on $E$ such that $y((-\infty,t])=x_t$, $t \in \mathbb R$. Let $\mathcal H$ be the set of Borel sets $E \in \mathcal B(\mathbb R)$ such that $x(E)=y(E)$. Then $\mathcal H$ contains all intervals of the form $(-\infty, t]$ for each $t\in \mathbb R$. Since $(-\infty,t) = \bigcup_{s_n\nearrow t}(-\infty, s_n]$, we have $(-\infty,t)\in \mathcal H$.
Then $\mathcal H$ is a Dynkin system, i.e. a system of subsets containing its universe which is closed under the set theoretical complements and countable unions of  disjoint subsets, \cite{Bau}. The system $\mathcal H$ contains all intervals $(-\infty, t)$ for $t \in \mathbb R$; these intervals form a $\pi$-system, i.e. intersection of two sets from the $\pi$-system is from the $\pi$-system.  Hence, by \cite[Thm 2.1.10]{Dvu} or \cite[Thm 1.1]{Kal}, $\mathcal H$ is also a $\sigma$-algebra, and finally we have $\mathcal H = \mathcal B(\mathbb R).$
\end{proof}

\begin{lemma}\label{le:3.2}
Let $x$ and $y$ be observables of a $\sigma$-lattice effect algebra $E$. Then $x\preceq_s y$ if and only if $y((-\infty,t])\le x((-\infty,t])$ for each $t \in \mathbb R$.
\end{lemma}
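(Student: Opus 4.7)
The plan is to reduce the equivalence to the two dual identities
$$x((-\infty,t]) = \bigwedge_{s>t} x((-\infty,s)) \quad\text{and}\quad x((-\infty,t)) = \bigvee_{s<t} x((-\infty,s]),$$
which hold for every observable $x$ on a $\sigma$-lattice effect algebra. Each of these comes directly from the $\sigma$-continuity (property (iv) of observables listed right after the definition) applied to a monotone sequence of Borel sets: $(-\infty,t] = \bigcap_n (-\infty,t+1/n)$ gives the first via $\{x((-\infty,t+1/n))\}\searrow x((-\infty,t])$, and $(-\infty,t) = \bigcup_n (-\infty,t-1/n]$ gives the second via $\{x((-\infty,t-1/n])\}\nearrow x((-\infty,t))$. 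Once one has the sequential identity, replacing the sequence by the whole directed family $s>t$ (resp.\ $s<t$) is free: the monotonicity of $x$ forces the supremum over the larger family to agree.

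With these identities in hand, both directions of the lemma are one-liners. For the forward direction, assume $x\preceq_s y$, i.e.\ $y((-\infty,s))\le x((-\infty,s))$ for all $s\in\mathbb R$. Fixing $t$,
$$y((-\infty,t]) = \bigwedge_{s>t} y((-\infty,s)) \le \bigwedge_{s>t} x((-\infty,s)) = x((-\infty,t]).$$
For the converse, assume $y((-\infty,s])\le x((-\infty,s])$ for every $s$. Fixing $t$,
$$y((-\infty,t)) = \bigvee_{s<t} y((-\infty,s]) \le \bigvee_{s<t} x((-\infty,s]) = x((-\infty,t)),$$
so $x\preceq_s y$.

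I do not anticipate any real obstacle: everything rests on the two dual $\sigma$-continuity identities. The only care needed is to verify that the suprema and infima over the uncountable families $\{s:s>t\}$ and $\{s:s<t\}$ actually exist in the $\sigma$-lattice $E$ and coincide with the corresponding sequential limits. This is guaranteed exactly as in the first paragraph of the proof of Proposition \ref{pr:2.4}: by density of $\mathbb Q$ in $\mathbb R$ and the monotonicity of $s\mapsto x((-\infty,s))$ (resp.\ $s\mapsto x((-\infty,s])$), the suprema/infima may be taken along countable cofinal subsequences, and they exist in $E$.
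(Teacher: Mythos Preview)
Your proof is correct and follows essentially the same approach as the paper's own argument: both directions are obtained from the dual identities $x((-\infty,t])=\bigwedge_{s>t}x((-\infty,s))$ and $x((-\infty,t))=\bigvee_{s<t}x((-\infty,s])$, applied termwise to the assumed inequalities. If anything, you are slightly more careful than the paper in justifying that the infima and suprema over the uncountable index sets exist in the $\sigma$-lattice and coincide with the sequential limits.
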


\begin{proof}
Let $x\preceq_s y$. Then $y((-\infty,s))\le x((-\infty,s))$, $s \in \mathbb R$ which gets $y((-\infty, t])=\bigwedge_{s>t}y((-\infty,s))\le \bigwedge_{s>t}x((-\infty,s)) = x((-\infty, t])$, $t \in \mathbb R$.

Conversely, assume $y((-\infty,s])\le x((-\infty,s])$, $s \in \mathbb R$. Then
$x((-\infty,t))=\bigvee_{s<t} x((-\infty,s]) \le \bigvee_{s<t}y((-\infty, t])= y((-\infty, t))$, $t \in \mathbb R$.
\end{proof}

In the next two results we show how we can calculate infima and suprema of a system of observables $\{x_\alpha: \alpha \in A\}$ using the systems $\{x_\alpha((-\infty,t]): t \in \mathbb R\}$, $\alpha \in A$, instead of the systems $\{x_\alpha((-\infty,t)): t \in \mathbb R\}$, $\alpha \in A$, as it was done in \cite{Ols, dGr} for Hermitian operators on a Hilbert space $H$.

\begin{lemma}\label{le:3.3}
Let $\{x_\alpha: \alpha \in A\}$ be a system of bounded observables on a complete lattice effect algebra $E$ such that there is a bounded observable $y$ on $E$ which is a lower bound of $\{x_\alpha: \alpha \in A\}$. Define
$$
x(t):=\bigwedge_{u>t} \bigvee_\alpha x_\alpha((-\infty,u]),\ t \in \mathbb R. \eqno(4.5)
$$
Then the system $\{x(t): t \in \mathbb R\}$ satisfies {\rm (4.2)--(4.4)} of Theorem {\rm \ref{th:3.1}} and it determines a unique bounded observable $x$ on $E$ and this observable is the greatest lower bound of $\{x_\alpha: \alpha \in A\}$ under the Olson order $\preceq_s$ in $\mathcal O_b(E)$, and we have $x = \bigwedge_\alpha x_\alpha$.
\end{lemma}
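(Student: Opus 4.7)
The plan is to mirror the proof of Lemma \ref{le:2.3} but working with the $(-\infty,t]$ spectral resolution from Theorem \ref{th:3.1} in place of the $(-\infty,t)$ resolution, using Lemma \ref{le:3.2} as the translation device between the two formulations of the Olson order.

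Set $x_\alpha^+(t):=x_\alpha((-\infty,t])$ and $y^+(t):=y((-\infty,t])$. By Lemma \ref{le:3.2} the hypothesis $y\preceq_s x_\alpha$ for every $\alpha\in A$ gives $x_\alpha^+(t)\le y^+(t)$ for every $t$ and $\alpha$, so all joins $\bigvee_\alpha x_\alpha^+(u)$ appearing in (4.5) exist in the complete lattice $E$ (being bounded above by $y^+(u)\le 1$), and so do the outer meets. I then verify that the system $\{x(t):t\in\mathbb R\}$ defined by (4.5) fulfils conditions (4.2)--(4.4) of Theorem \ref{th:3.1}. Monotonicity (4.2) is immediate. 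For (4.3): boundedness of $y$ yields $s_0$ with $y^+(u)=0$ for $u<s_0$, and picking any $u$ with $t<u<s_0$ forces $\bigvee_\alpha x_\alpha^+(u)\le y^+(u)=0$, whence $x(t)=0$ for $t<s_0$; for the top, fix one $\alpha_0$ and use boundedness of $x_{\alpha_0}$ to find $t_0$ with $x_{\alpha_0}^+(u)=1$ for $u\ge t_0$, so $x(t)=1$ for $t\ge t_0$. For the right-continuity (4.4) I compute
$$
\bigwedge_{t>s}x(t)=\bigwedge_{t>s}\bigwedge_{u>t}\bigvee_\alpha x_\alpha^+(u)=\bigwedge_{u>s}\bigvee_\alpha x_\alpha^+(u)=x(s),
$$
the middle equality being a cofinality observation: every $u>s$ admits some $t$ with $s<t<u$.

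Theorem \ref{th:3.1} then produces a unique observable $x$ with $x((-\infty,t])=x(t)$, and the analysis above already establishes that $x$ is bounded. To see $x$ is a lower bound, use $x((-\infty,t])=x(t)\ge\bigwedge_{u>t}x_\alpha^+(u)=x_\alpha((-\infty,t])$ (the last equality being (4.4) applied to $x_\alpha$) together with Lemma \ref{le:3.2}. Conversely, if $z\in\mathcal O_b(E)$ satisfies $z\preceq_s x_\alpha$ for every $\alpha$, Lemma \ref{le:3.2} gives $x_\alpha^+(u)\le z((-\infty,u])$, so $\bigvee_\alpha x_\alpha^+(u)\le z((-\infty,u])$, and taking $\bigwedge_{u>t}$ together with (4.4) for $z$ yields $x(t)\le z((-\infty,t])$, i.e., $z\preceq_s x$ by Lemma \ref{le:3.2}.

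The main subtlety I expect is justifying the outer meet $\bigwedge_{u>t}$ in (4.5), which has no counterpart in formula (3.7) of Lemma \ref{le:2.3}: in the $(-\infty,t)$-formulation the required left-continuity (3.4) is automatic from the outer join, whereas the $(-\infty,t]$-formulation demands the right-continuity (4.4), which cannot be obtained from $\bigvee_\alpha x_\alpha^+(t)$ alone because joins over $\alpha$ need not commute with meets over $u$; the outer meet forces the right regularization by hand. The remaining work is just the interchange-of-meets (cofinality) argument.
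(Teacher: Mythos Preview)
Your argument is correct and follows essentially the same route as the paper's proof: both form $x^0(t)=\bigvee_\alpha x_\alpha((-\infty,t])$, take its right-regularization, and then run the same lower-bound/greatest-lower-bound verification via Lemma \ref{le:3.2}. The only organizational difference is that the paper packages the verification of (4.2)--(4.4) as an appeal to Proposition \ref{pr:2.4}, whereas you carry out the cofinality computation for (4.4) and the boundedness checks for (4.3) by hand.
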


\begin{proof}
Assume $y \preceq_s x_\alpha$, $\alpha \in A$. We set $x_\alpha(t)=x_\alpha((-\infty,t])$ and $y(t)=y((-\infty,t])$. Then $x_\alpha(t)\le y(t)$ for each $t \in \mathbb R$ and each $\alpha \in A$. Define $x^0(t)=\bigvee_\alpha x_\alpha(t)$. Then $\{x^0(t): t \in \mathbb R\}$ satisfies the conditions of Proposition \ref{pr:2.4}.

Let $\{x(t): t \in \mathbb R\}$ be the the right-regularization of $\{x^0(t): t \in \mathbb R\}$, i.e.
$$
x(t)=x^0_r(t)= \bigwedge_{u>t} \bigvee_\alpha x_\alpha(u),\  t\in \mathbb R,
$$
see (3.9). By Proposition \ref{pr:2.4}, the system $\{x(t): t \in \mathbb R\}$ satisfies (4.2)--(4.4) of Theorem \ref{th:3.1}, so it defines a unique bounded observable $x$ on $E$ such that $x((-\infty,t])=x(t)$, $t \in \mathbb R$. Then $x(t)= \bigwedge_{u>t}\bigvee_\alpha x_\alpha(u) \ge \bigwedge_{u>t} x_\alpha(u) = x_\alpha(t)$ for each $\alpha \in A$, i.e. $x \preceq_s x_\alpha$ for each $\alpha \in A$. Now let $z$ be a bounded observable on $E$ such that $z \preceq_s x_\alpha$, $\alpha \in A$. Whence $x_\alpha(u)\le z(u)$, and $\bigvee_\alpha x_\alpha(u)\le z(u)$ which gets $x(t)= \bigwedge_{u>t}\bigvee_\alpha x_\alpha(u) \le \bigwedge_{u>t} z(u) = z(t)$, i.e. $z \preceq_s x$. Finally, $x = \bigwedge_\alpha x_\alpha$.
\end{proof}

\begin{lemma}\label{le:3.4}
Let $\{x_\alpha: \alpha \in A\}$ be a system of bounded observables of a complete lattice effect algebra $E$ such that there is a bounded observable $y$ on $E$ which is an upper bound of $\{x_\alpha: \alpha \in A\}$. Define
$$
x(t):= \bigwedge_\alpha x_\alpha((-\infty,t]),\ t \in \mathbb R. \eqno(4.6)
$$
Then the system $\{x(t): t \in \mathbb R\}$ satisfies conditions {\rm (3.2)--(3.4)} and it determines a unique bounded observable $x$ on $E$ and this observable is the least upper bound of $\{x_\alpha: \alpha \in A\}$ under the Olson order $\preceq_s$ in $\mathcal O_b(E)$, and we have $x = \bigvee_\alpha x_\alpha$.
\end{lemma}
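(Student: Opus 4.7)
The plan is to parallel Lemma \ref{le:3.3}, but for the supremum and with the closed-interval resolution playing the role of the right-regularization. Setting $x_\alpha(t) := x_\alpha((-\infty,t])$ and $y(t) := y((-\infty,t])$, Lemma \ref{le:3.2} converts the hypothesis $x_\alpha \preceq_s y$ into the pointwise inequality $y(t) \le x_\alpha(t)$ for every $\alpha \in A$ and every $t \in \mathbb{R}$. Since $E$ is a complete lattice, $x(t) := \bigwedge_\alpha x_\alpha(t)$ is well-defined in $E$ and satisfies the bracketing $y(t) \le x(t) \le x_\alpha(t)$ for each $\alpha$.

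Next I would verify that $\{x(t) : t \in \mathbb{R}\}$ satisfies (4.2)--(4.4) of Theorem \ref{th:3.1} (these are the conditions natural to a closed-interval resolution; the statement's reference to (3.2)--(3.4) should be read as referring to Theorem \ref{th:3.1}, since $x(t)$ is manifestly right-continuous and not left-continuous). Monotonicity is immediate. For (4.3): fix any $\alpha_0 \in A$ and apply property (iv) of the observable $x_{\alpha_0}$ to the decreasing sequence $(-\infty,-n] \searrow \emptyset$ to get $\bigwedge_t x(t) \le \bigwedge_t x_{\alpha_0}(t) = 0$, while $\bigvee_t x(t) \ge \bigvee_t y(t) = 1$. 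Right-continuity (4.4) follows from
$$\bigwedge_{s>t} x(s) \;=\; \bigwedge_\alpha \bigwedge_{s>t} x_\alpha(s) \;=\; \bigwedge_\alpha x_\alpha(t) \;=\; x(t),$$
by commutativity of infima together with the right-continuity of each $x_\alpha(\cdot)$. Theorem \ref{th:3.1} then delivers a unique observable $x$ with $x((-\infty,t]) = x(t)$. Boundedness: since $y$ is bounded, there is $t_0$ with $y(t) = 1$ (hence $x(t) = 1$) for $t \ge t_0$; and any single bounded member $x_{\alpha_0}$ provides $s_0$ with $x(t) \le x_{\alpha_0}(t) = 0$ for $t \le s_0$.

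Finally I would check the supremum property. By construction $x(t) \le x_\alpha(t)$ for each $\alpha$, so Lemma \ref{le:3.2} gives $x_\alpha \preceq_s x$. If $z \in \mathcal{O}_b(E)$ is any further upper bound, then $x_\alpha \preceq_s z$ forces $z((-\infty,t]) \le x_\alpha((-\infty,t])$ for all $\alpha$ and $t$; taking the infimum over $\alpha$ yields $z((-\infty,t]) \le x(t)$, and a second appeal to Lemma \ref{le:3.2} gives $x \preceq_s z$. The main technical point -- and perhaps the only nonroutine step -- is the lower boundedness of $x$: one must observe that a single bounded member of the family suffices, since the Olson order does not require the $x_\alpha$'s to share a uniform lower bound, and the infimum collapses to $0$ as soon as one component does.
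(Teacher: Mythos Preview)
Your proof is correct and follows essentially the same route as the paper's: define $x(t)=\bigwedge_\alpha x_\alpha((-\infty,t])$, verify that this system satisfies the closed-interval conditions (4.2)--(4.4), invoke Theorem~\ref{th:3.1} to obtain the observable, check boundedness via the given upper bound $y$ and a single member $x_{\alpha_0}$, and finish with the supremum argument through Lemma~\ref{le:3.2}. Your observation that the statement's reference to (3.2)--(3.4) should really be to (4.2)--(4.4) is well taken---the paper's own proof in fact verifies (4.2)--(4.4), so this is a typo in the statement; your argument is also more explicit than the paper's, which merely asserts that the verification is ``easy to see.''
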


\begin{proof}
Define $x(t)$ by (4.6). It is easy to see that $\{x(t): t \in \mathbb R\}$ satisfies (4.2)--(4.4).

Let $x$ be a unique observable of $E$ such that $x(t)=x((-\infty,t])$, $t \in \mathbb R$.

From the construction of $x$ we have $x(t)\le x_\alpha(t)$ for each $\alpha\in A$ and each $t\in \mathbb R$. This implies $x_\alpha\preceq_s x$ for each $\alpha \in A$. Since $x(t) \le y(t)$, there is $t_0\in \mathbb R$ such that $x(t)=1$ for each $t\ge t_0$, and $x(t)\le x_\alpha(t)$, there is $s_0\in \mathbb R$ such that $x(t)=0$ for $t<s_0$, showing $x$ is a bounded observable.

Finally, let $z$ be a bounded observable on $E$ such that $x_\alpha\preceq_s z$ for each $\alpha \in A$. Then $x_\alpha(t)\ge z(t)$ for each $\alpha\in A$, i.e. $x(t)\ge z(t)$, $t \in \mathbb R$. Hence, $x\preceq_s z$.
\end{proof}

As in the previous section, Lemmas \ref{le:3.3}--\ref{le:3.4} hold also for $\sigma$-complete effect algebras if $A$ is a countable set.

Let $\mathcal P(H)$ be the set of all orthogonal projections on a  real, complex or quaternionic Hilbert space $H$. It is an orthomodular complete lattice, see \cite{Var}. It is isomorphic to the system $\mathcal L(H)$ of all closed subspaces $M$ of a Hilbert space $H$. The isomorphism is given by $M \leftrightarrow P_M$, where $P_M$ is the orthogonal projections from $H$ onto $M\in \mathcal L(H)$. We recall that $\bigwedge_i M_i = \bigcap_i M_i$, $\bigvee_i M_i$ is the closed subspace of $H$ generated by $\bigcup_iM_i$, and $M'=M^\perp:=\{x \in H: x \perp y$ for each $y \in M\}$. In $\mathcal P(M)$, $P'=I-P$, where $I$ is the identity operator in $H$. The spaces $\mathcal P(H)$ and $\mathcal L(H)$ are complete lattice effect algebras. We note that any observable $x$ on $\mathcal P(H)$ is in fact a spectral measure and vice versa, so it determines a unique Hermitian operator (if $x$ is a bounded observable) or a self-adjoint operator (if $x$ is not a bounded observable). For bounded observables $x$ we have a unique Hermitian operator $A$ with the spectral measure $E_A$ (= observable) such that $A = \int t \dx E_A(t)$, see e.g. \cite{Hal}.

Applying the Olson order $\preceq_s$ on the set of all bounded observables of the complete lattice effect algebra  $\mathcal P(H)$, we see that it coincides with the spectral order of Hermitian operators as it was defined by Olson \cite{Ols}. In addition, $\preceq_s$ extends the standard ordering of projectors, is coarser than the usual ordering of operators. The same is true if we deal with projectors on a von Neumann algebra, \cite{Ols}.
So we have showed that we can introduce a partial order on many complete lattice effect algebras and  $\sigma$-complete effect algebras like $\sigma$-complete MV-algebras, $\mathcal P(H)$, $\sigma$-complete orthomodular lattices, etc. which extends original Olson's approach.

We note that an observable $x$ is {\it sharp} if $x(E)\in \Sh(E)$ for each Borel set $E \in \mathcal B(\mathbb R)$. For example, if $E=\mathcal P(H)$, then every observable is sharp. On the other hand, if $E$ is a $\sigma$-complete MV-algebra, then the question observable $q_a$ of an element $a \in E$ is sharp iff $a$ is sharp.

In addition, a lattice $(E,\le)$ is said to be an {\it involution lattice} if there is a mapping $^-: E \to E$ (called {\it involution} or {\it negation}) such that (i) $a^=:=a^{--}=a$ (double negation), $a\in E$, and (ii) $a\le b$ implies $b^-\le a^-$, $a,b \in E$ (contraposition).

Now we show that in the set $\mathcal{EO}(E)$ of a $\sigma$-lattice effect algebra of observables whose spectra are in the interval $[0,1]$ we can introduce a kind of negation which in the case of $\mathcal P(H)$ corresponds to the negation of operators, so that $\mathcal{EO}(E)$ will resemble the properties of $\mathcal E(H)$ with the spectral ordering.

\begin{theorem}\label{th:3.5}
Let $x\in\mathcal {EO}(E)$, where $E$ is a $\sigma$-lattice effect algebra (complete effect algebra). We define $x^-:=f(x)$, where $f(t)=1-t$. Then, for $x,y, x_\alpha \in \mathcal{EO}(E)$, we have
\begin{itemize}
\item[{\rm (i)}] $y^-\preceq_s x^-$ if $x\preceq_s y$.
\item[{\rm (ii)}] $x^{--}=x$.
\item[{\rm (iii)}] $x_0^-=x_1$, $x^-_1=x_0$.
\item[{\rm (iv)}] $(\bigwedge_\alpha x_\alpha)^- = \bigvee_\alpha x_\alpha^-$ and $(\bigvee_\alpha x_\alpha)^- = \bigwedge_\alpha x_\alpha^-$ if  the index set $A$ is countable ($A$ is an arbitrary set).
\item[{\rm (v)}] $q_a^- = q_{a'}$, $a \in E$.
\item[{\rm (vi)}] $q_a\wedge q_b= q_{a\wedge b}$, $q_a\vee q_b= q_{a\vee b}$, $a,b \in E$.
\item[{\rm (vii)}] Let $g(t)= \min\{t,1-t\}$ and $h(t)=\max\{1,1-t\}$. Then $x\wedge x^-\preceq_s g(x)$ and $h(x)\preceq_s x\vee x^-$.
\item[{\rm (viii)}] $q_a$ is a sharp observable $\Leftrightarrow$ $a$ is a sharp element $\Leftrightarrow$ $q_a\wedge q_a^- = q_0$.
\end{itemize}
In addition, $\mathcal{EO}(E)$ is an involution complete lattice with respect to the Olson order $\preceq_s$.
\end{theorem}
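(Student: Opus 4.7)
The plan is to verify the eight items in sequence using the observable calculus $f(x)(E)=x(f^{-1}(E))$, Lemma \ref{le:3.2}, Example \ref{ex:2.8}, and Proposition \ref{pr:2.2}, then conclude the final statement by combining these with Theorem \ref{th:2.6}.

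I would start with (i)--(iii). For (i), $f^{-1}((-\infty,t))=(1-t,\infty)$ gives $x^-((-\infty,t))=x((1-t,\infty))=x((-\infty,1-t])'$; if $x\preceq_s y$, then Lemma \ref{le:3.2} yields $y((-\infty,1-t])\le x((-\infty,1-t])$, and passing to orthocomplements (which is antitone) delivers $y^-\preceq_s x^-$. Item (ii) is immediate from $f\circ f=\mathrm{id}$. For (iii), $q_0$ is the Dirac observable at $0$ and $f(0)=1$, so $q_0^-$ is concentrated at $1$, which is $q_1$; the other equality then follows from (ii).

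Items (iv)--(vi) reduce to the first three. For (iv), if $x=\bigwedge_\alpha x_\alpha$, then (i) gives each $x_\alpha^-\preceq_s x^-$, making $x^-$ an upper bound; any competing upper bound $z$ satisfies $z^-\preceq_s x_\alpha^{--}=x_\alpha$ by (i)--(ii), forcing $z^-\preceq_s x$ and hence $x^-\preceq_s z$; the dual De Morgan law is symmetric. Item (v) follows from the two-point representation of $q_a$ (mass $a'$ at $0$ and mass $a$ at $1$): $f$ carries these masses to $a'$ at $1$ and $a$ at $0$, which is precisely the representation of $q_{a'}$. For (vi), the meet case is Example \ref{ex:2.8}(2), and the join case follows by De Morgan: $q_a\vee q_b=(q_{a'}\wedge q_{b'})^-=q_{a'\wedge b'}^-=q_{a\vee b}$.

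For (vii), I would compute $g(x)((-\infty,t))=x((-\infty,t)\cup(1-t,\infty))$ and $x^-((-\infty,t))=x((1-t,\infty))$; monotonicity of $x$ on the resulting set-theoretic containments yields the comparability of $g(x)$ with each of $x$ and $x^-$, and the $h$-case is dual. Item (viii) combines (vi) and (v) to give $q_a\wedge q_a^-=q_{a\wedge a'}$, which coincides with $q_0$ precisely when $a\wedge a'=0$ by the injectivity $c\mapsto q_c$ of Proposition \ref{pr:2.2}; sharpness of $q_a$ as an observable reduces to $a\in\Sh(E)$ because $q_a$ attains only the values $0,a,a',1$ and $\Sh(E)$ is closed under orthocomplement. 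The concluding assertion then follows by combining the completeness of $\mathcal{EO}(E)$ from Theorem \ref{th:2.6} with the involution axioms proved in (i), (ii), and (iv). The main obstacle I foresee is (vii): $g$ and $h$ collapse the spectrum of question observables to a single point, so equalities at the level of $E$ are unavailable, and the direction of the Olson order (a larger spectral function corresponds to a smaller observable) must be tracked carefully against the set-theoretic containments inside $g^{-1}((-\infty,t))$ and $h^{-1}((-\infty,t))$.
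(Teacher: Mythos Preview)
Your treatment of (i)--(vi) and (viii) is essentially the paper's: the same use of $f^{-1}((-\infty,t))=(1-t,\infty)$ and Lemma \ref{le:3.2} for (i), the involution identity for (ii), the explicit spectral resolution (3.5) for (iii), (v), (vi), and the reduction of (viii) to (v)--(vi). Your De Morgan derivation of (iv) from (i)--(ii) is a clean way to unpack what the paper calls ``simple lattice properties.''

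The genuine gap is (vii). Your computation $g(x)((-\infty,t))=x\bigl((-\infty,t)\cup(1-t,\infty)\bigr)$ is correct, and the containments $(-\infty,t)\subseteq(-\infty,t)\cup(1-t,\infty)\supseteq(1-t,\infty)$ indeed give $g(x)((-\infty,t))\ge x((-\infty,t))$ and $g(x)((-\infty,t))\ge x^-((-\infty,t))$. But by the definition of $\preceq_s$ this yields $g(x)\preceq_s x$ and $g(x)\preceq_s x^-$, hence $g(x)\preceq_s x\wedge x^-$, the \emph{reverse} of the stated inequality. Your approach cannot produce the stated direction, because for $0<t\le 1/2$ the two intervals are disjoint and one would need $x((-\infty,t))+x((1-t,\infty))\le x((-\infty,t))\vee x((1-t,\infty))$, while in a lattice effect algebra one always has $a\vee b\le a+b$. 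Concretely, in the MV-algebra $E=[0,1]$ take $x=q_{1/2}$; then $x^-=q_{1/2}$, $x\wedge x^-=q_{1/2}$, but $g(x)=q_0$, and $q_{1/2}\preceq_s q_0$ fails by Proposition \ref{pr:2.2}.

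The paper's own argument for (vii) sidesteps this by writing $g(x)((-\infty,t))=x((-\infty,t))$ for $0<t\le 1/2$ and $g(x)((-\infty,t))=x((1-t,\infty))$ for $1/2<t\le 1$, which drops the second interval (respectively, replaces the union by one piece) and is not the value of $x\bigl(g^{-1}((-\infty,t))\bigr)$ in a general $\sigma$-lattice effect algebra. So the discrepancy you flagged as ``the main obstacle'' is real: with your (correct) computation of $g(x)((-\infty,t))$, the inequality you can actually establish is $g(x)\preceq_s x\wedge x^-$, and the stated direction does not follow.
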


\begin{proof}
(i) We have $x\preceq_s y$ iff $y((-\infty,t))\le x((-\infty, t))$, $t \in \mathbb R$. By Lemma \ref{le:3.2}, this is equivalent to the condition $y((-\infty,t])\le x((-\infty, t])$, $t \in \mathbb R$.  Then $x((t,\infty))= 1-x((-\infty,t])\le 1-y((-\infty, t])= y((t,\infty))$. On the other side, $f(x)((-\infty,t))= x(\{s\in \mathbb R: 1-s<t\})= x(\{s\in \mathbb R: 1-t<s\}) = x((1-t,\infty))\le y((1-t,\infty))=f(y)((-\infty, t))$, that is $y^-\preceq_s x^-$.

(ii) $x^{--}=f(f(x)) = x$ while $f\circ f$ is the identity.

(iii) Use (3.5).

(iv) It follows simple lattice properties of $\mathcal{EO}(E)$ and of the negation $x^-$.

(v), (vi) Use (3.5).

(vii) Let $g(t)=\min\{t,1-t\}$. Then

$$
g(x)((-\infty, t))= \left\{\begin{array}{ll}
0 & \mbox{if} \ t\le 0,\\
x((-\infty,t)) & \mbox{if}\ 0< t\le 1/2,\\
x((1-t,\infty)) & \mbox{if}\ 1/2< t\le 1,\\
1 & \mbox{if}\ 1<t,
\end{array}
\right.
$$
and
$$
(x\wedge x^-)((-\infty, t))= \left\{\begin{array}{ll}
0 & \mbox{if} \ t\le 0,\\
x((-\infty,t))\vee x((1-t,\infty)) & \mbox{if}\ 0< t\le 1,\\
1 & \mbox{if}\ 1<t,
\end{array}
\right.
$$
which shows $g(x)((-\infty,t))\le (x\wedge x^-)((-\infty,t))$, i.e. $x\wedge x^-\preceq_s g(x)$.

The second property follows from (i), (iv) and the latter proved property.

(viii) It is evident that $q_a$ is sharp iff $a$ is sharp. Now let $a$ be sharp. Then due to (v)--(vi), we have $q_a\wedge q_a^-= q_{a\wedge a'}= q_0$, and vice versa.

The final statement follows from Theorem \ref{th:2.6}.
\end{proof}

\begin{example}\label{ex:4.6}
If $x$ is a sharp observable, then not necessarily $x\wedge x^-=q_0$. Indeed, let $E=\mathcal B(\mathbb R)$, $h(t) = 0$, if $t\le 0$, $h(t)=t$ if $0\le t \le 1$, $h(t)=1$ if $t>1$, and $h_1(t) = 1$ if $t\le 1$, $h_1(t)=1-t$, $h_1(t)=0$ if $t>1$. Let $x=h^{-1}$. Then $x^-= f(x)=h^{-1}\circ f^{-1}=h_1^{-1}$. Hence, if $t=0.3$, then $x((-\infty,t))=(-\infty, 0.3)$, $x^-((-\infty,0.3))=((0.3,\infty))$. So $x((-\infty,t)) \vee x^-((-\infty,t)) = x((-\infty,t)) \cup x^-((-\infty,t))= (-\infty, 0.3)\cup (0.3,\infty)\ne \mathbb R=1$.
\end{example}

\section{Miscellaneous Properties of the Olson Order}

Now we show that the Olson order on $\mathcal O_b(E)$ in the case $E$ is a $\sigma$-algebra of subsets generalizes also the natural order of functions.

We remind that if $f$ and $g$ are two real-valued functions on $\Omega$, then we write $f\leqslant g$ iff $f(\omega)\le g(\omega)$ for each $\omega \in \Omega$.

\begin{theorem}\label{th:5.1}
Let $E=\mathcal S$, where $\mathcal S$ is a $\sigma$-algebra of subsets of a non-void set $\Omega$. If $f:\Omega \to \mathbb R$ is an $\mathcal S$-measurable function, then the mapping $x_f:=f$, i.e. $x_f(E)=f^{-1}(E)$, $E \in \mathcal B(\mathbb R)$ is an observable. Conversely, for every observable $x$, there is a unique $\mathcal S$-measurable function $f:\Omega \to \mathbb R$ such that $x= x_f$. Therefore, for every observable $x$ we have $x(\bigcup_n E_n)=\bigcup_n x(E_n)$ for any sequence $\{E_n\}$ of Borel sets.

Moreover, $x_f$ is bounded if and only if $f$ is bounded.

Let $f,g$ be two $\mathcal S$-measurable function. Then $x_f \preceq_s x_g$ if and only if $f\leqslant g$.

In addition, let $\{f_n\}$ and $\{g_n\}$ be sequences of bounded $\mathcal S$-measurable functions on $\Omega$ such that there are bounded $\mathcal S$-measurable functions $f_0$ and $g_0$ with $f_0\leqslant f_n$ and $g_n \leqslant g_0$ for each positive integer $n$. Then
$$
\bigwedge_n x_{f_n} = x_f,\quad \bigvee_n x_{g_n} = x_g, \eqno(5.1)
$$
where $f= \inf_n f_n$ and $g=\sup_n g_n$.
\end{theorem}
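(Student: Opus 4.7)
The plan is to exploit the bijection between $\mathcal S$-measurable functions and observables on the $\sigma$-algebra $\mathcal S$, and then to read off boundedness, the order relation, and the lattice identities from this bijection combined with the order characterization $(3.6)$.

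That $x_f := f^{-1}$ is an observable is immediate, since preimage commutes with complements and countable unions. For the converse, given $x \in \mathcal O(\mathcal S)$, I would define
\[
f(\omega) := \inf\{t \in \mathbb Q : \omega \in x((-\infty,t])\}, \quad \omega \in \Omega,
\]
which is finite for every $\omega$ because $\bigcup_n x((-\infty,n]) = x(\mathbb R) = \Omega$ and $\bigcap_n x((-\infty,-n]) = x(\emptyset) = \emptyset$. Then
\[
f^{-1}((-\infty,s)) = \bigcup_{t \in \mathbb Q,\, t<s} x((-\infty,t]),
\]
showing that $f$ is $\mathcal S$-measurable and that $x_f$ agrees with $x$ on intervals of the form $(-\infty,s)$. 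The main obstacle is to upgrade this agreement to every Borel set; this is handled by the Dynkin-class argument already used in the proof of Theorem~\ref{th:3.1}. Uniqueness of $f$ follows because $\{f<s\} = x((-\infty,s))$ determines $f$ pointwise. Boundedness of $x_f$ then amounts to $f^{-1}(C) = \Omega$ for some compact $C$, equivalently $f(\Omega)$ lying in a compact set, equivalently $f$ being bounded.

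For the order equivalence, $x_f \preceq_s x_g$ means $g^{-1}((-\infty,t)) \subseteq f^{-1}((-\infty,t))$ for every $t \in \mathbb R$, i.e.\ $g(\omega)<t \Rightarrow f(\omega)<t$ for every $\omega$ and $t$. Taking $t = g(\omega)+1/n$ and letting $n\to\infty$ yields $f(\omega)\le g(\omega)$; the converse implication is immediate. Hence $x_f \preceq_s x_g$ iff $f \leqslant g$.

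For the identities $(5.1)$, Theorem~\ref{th:2.7} guarantees the existence of $\bigwedge_n x_{f_n}$ and $\bigvee_n x_{g_n}$ in $\mathcal O_b(\mathcal S)$, since $x_{f_0}$ and $x_{g_0}$ are bounds below and above by the order equivalence just proved. The functions $f = \inf_n f_n$ and $g = \sup_n g_n$ are $\mathcal S$-measurable (countable inf/sup) and bounded. For any bounded observable $z = x_h$, the chain
\[
z \preceq_s x_{f_n} \text{ for all } n \;\Longleftrightarrow\; h \leqslant f_n \text{ for all } n \;\Longleftrightarrow\; h \leqslant f \;\Longleftrightarrow\; z \preceq_s x_f
\]
identifies $x_f$ as the greatest lower bound, and the symmetric chain identifies $x_g$ as the least upper bound. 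The countability hypothesis is essential, since uncountable pointwise infima or suprema of measurable functions need not be measurable.
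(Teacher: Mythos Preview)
Your proposal is correct and follows essentially the same route as the paper: construct $f$ as an infimum over rational thresholds, verify agreement of $x_f$ and $x$ on half-lines, and extend to all Borel sets via the Dynkin $\pi$-system argument from Theorem~\ref{th:3.1}; the order equivalence and the lattice identities are then read off from the bijection $f\leftrightarrow x_f$. The only cosmetic differences are that the paper defines $f$ using open half-lines $(-\infty,r_j)$ rather than closed ones, and proves the order equivalence by contradiction rather than by your limit $t=g(\omega)+1/n$; both choices are equivalent.
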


\begin{proof}
First we show that if $x$ is an observable on $E$, then there is a unique $\mathcal S$-measurable function $f$ such that $x= x_f$.

{\it Existence of $f$.} Let $r_1,r_2,\ldots$ be any enumeration of the set of rational numbers. If we set
$$
f(\omega)=\inf\{r_j: \omega \in x((-\infty,r_j))\},
$$
then $f$ is a well-defined finite function for which we have
$$
f^{-1}((-\infty,r_k))=\bigcup_{i: r_i< r_k} x((\infty,r_i)),
$$
hence, $f$ is $\mathcal S$-measurable such that $x((-\infty,r_k))=x_f((-\infty,r_k))$ for each $r_k$. If $t$ is any real number, there is a sequence $\{s_n\}$ of rational numbers such that $\{s_n\}\nearrow t$, therefore, $x((-\infty,t))=x_f((-\infty,t))$ for each $t \in \mathbb R$. Consequently, if $\mathcal K=\{E \in\mathcal B(\mathbb R): x(E)=x_f(E)\}$, then $(-\infty,t)\in \mathcal K$, $t\in \mathbb R$ as well as $[a,b)\in \mathcal K$ for $a\le b$. The family $\mathcal K$ is a Dynkin system and similarly as in the proof of Theorem \ref{th:3.1}, we can conclude $\mathcal K=\mathcal B(\mathbb R)$. Whence, $x=x_f$ which in particular means that $x$ is a $\sigma$-homomorphism from $\mathcal B(\mathbb R)$ into $\mathcal S$ preserving countable unions of Borel sets.

{\it Uniqueness of $f$.} Let $g$ be an arbitrary finite $\mathcal S$-measurable function on $\Omega$ such that $x=x_g$ and assume that $f\ne g$. Then there is $\omega_0 \in \Omega$ such that $f(\omega_0)>g(\omega_0)$ or $f(\omega_0) < g(\omega_0)$. In the first case, we choose a real number $t$ such that $g(\omega_0)<t<f(\omega_0)$. Then $\omega_0 \in \{\omega: g(\omega)<t\}=\{\omega: f(\omega_0)<t\}$. From the choice of $\omega_0$, we have $\omega_0 \not\in \{\omega: f(\omega_0)<t\}$, an absurd. Similarly for the second case. Whence, $f=g$.

Now let $f\leqslant g$, then $x_g((-\infty,t))=\{\omega: g(\omega)<t\}\subseteq \{\omega: f(\omega) <t\}$, i.e. $x_f \preceq_s x_g$.  Conversely, let $x_f \preceq_s x_g$, and assume that there is $\omega_0\in \Omega$ such that $f(\omega_0) > g(\omega_0)$. Choose $t \in \mathbb R$ with $g(\omega_0)<t<f(\omega_0)$. Then $\omega_0\in \{\omega: g(\omega)<t\} \subseteq \{\omega: f(\omega)<t\}$, but $\omega_0\not\in \{\omega: f(\omega)<t\}$, which is a contradiction, so that $f\leqslant g$.

Equalities in (5.1) follow from the first part of the proof, and from the fact that $\inf_nf_n$ and $\sup_ng_n$ are $\mathcal S$-measurable functions.
\end{proof}

The just proved Theorem  can be generalized as follows.

Let $\mathcal T$ be a tribe of $[0,1]$-functions on $\Omega\ne \emptyset$.
Motivating by \cite{JPV}, we say that mapping $K: \Omega \times \mathcal B(\mathbb R)\to [0,1]$ is a {\it Markov kernel associated with} $\mathcal T$ (simply {\it Markov kernel}) if
\begin{itemize}
\item[(i)] for any fixed $E \in \mathcal B(\mathbb R)$, the mapping $K(\cdot,E)\in \mathcal T$;
\item[(ii)] for any fixed $\omega \in \Omega$, the mapping $K(\omega,\cdot)$ is a probability measure on $(\mathbb R, \mathcal B(\mathbb R))$.
\end{itemize}

\begin{theorem}\label{th:5.3}
Let $\mathcal T$ be a tribe of functions on a non-empty set $\Omega$. Let $K$ be a Markov kernel associated with $\mathcal T$. Then the mapping $x_K:\mathcal B(\mathbb R)\to \mathcal T$ defined by $x_K(E):= K(\cdot,E)$, $E\in \mathcal B(\mathbb R)$, is an observable on $\mathcal T$. Conversely, let $x$ be an observable on $\mathcal T$. Then there is a unique Markov kernel $K$ associated with $\mathcal T$ such that $x=x_K$.

Let $K$ and $H$ be two Markov kernels. We write $K \preceq H$ if $H(\omega,(-\infty,t))\le K(\omega,(-\infty,t))$, $\omega\in \Omega$, $t \in \mathbb R$.  Then $x_K\preceq_s x_H$ if and only if $K \preceq H$.

Finally, if $\{x_n\}$ is a countable system of bounded observables bounded from below by a bounded observable, then for $x=\bigwedge_n x_n$, we have
$$
x((-\infty,t))(\omega)= \sup_n x_n((-\infty, t)(\omega),\ \omega \in \Omega,\ t \in \mathbb R.
$$
Similarly, if $\{y_n\}$ is a countable system of bounded observables bounded from above by a bounded observable, then for $y = \bigvee_n y_n$, we have
$$
y((-\infty,t))(\omega)= \sup_{u_n\nearrow t}\inf_n x_n((-\infty,u_n))(\omega), \ \omega \in \Omega,\ t \in \mathbb R.
$$
\end{theorem}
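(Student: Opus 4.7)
The plan is to set up the bijection between Markov kernels and observables via the obvious pointwise identification, and then translate the Olson order and the infimum/supremum formulas through this bijection, exploiting the fact that in a tribe every lattice operation (including countable suprema and infima) is computed pointwise.

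For the direction $K \mapsto x_K$, I would define $x_K(E)(\omega) := K(\omega, E)$. Condition (i) of a Markov kernel guarantees $x_K(E) \in \mathcal T$. Since for each fixed $\omega$ the set function $K(\omega, \cdot)$ is a probability measure by (ii), evaluating pointwise yields $x_K(\mathbb R) = 1$, finite additivity on disjoint Borel sets (which in the tribe corresponds to the effect-algebra sum, since that sum is just pointwise addition where it stays below $1$), and continuity from below: if $E_i \nearrow E$ then $K(\omega, E) = \sup_i K(\omega, E_i)$ for every $\omega$, and because suprema in $\mathcal T$ are pointwise this reads $x_K(E) = \bigvee_i x_K(E_i)$. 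So $x_K$ satisfies all axioms of an observable.

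For the converse direction, given an observable $x$ on $\mathcal T$, set $K(\omega, E) := x(E)(\omega)$. Property (i) is immediate from $x(E) \in \mathcal T$. For (ii), fix $\omega$: then $K(\omega, \mathbb R) = 1$, finite additivity on disjoint Borel sets is obtained by evaluating $x(E \cup F) = x(E) + x(F)$ at $\omega$, and $\sigma$-additivity follows from $x(E) = \bigvee_i x(E_i)$ when $E_i \nearrow E$ together with the pointwise computation of the supremum in $\mathcal T$. Uniqueness is trivial, since $x = x_{K} = x_{H}$ forces $K(\omega,E) = x(E)(\omega) = H(\omega,E)$ for all $\omega$ and $E$.

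The order equivalence is then purely notational: $x_K \preceq_s x_H$ means $x_H((-\infty,t)) \le x_K((-\infty,t))$ in $\mathcal T$, and since the order in a tribe is pointwise, this is exactly $K \preceq H$. For the two formulas at the end, I would invoke Lemma \ref{le:2.3} and Lemma \ref{le:2.5} in their $\sigma$-complete form (applicable here because the families are countable and bounded on the appropriate side). They give $x((-\infty,t)) = \bigvee_n x_n((-\infty,t))$ and $y((-\infty,t)) = \bigvee_{u<t}\bigwedge_n y_n((-\infty,u))$ as elements of $\mathcal T$. Reading the lattice operations pointwise yields the claimed identities, with $\bigvee_{u<t}$ replaced by $\sup_{u_n \nearrow t}$ via the monotonicity of $u \mapsto \bigwedge_n y_n((-\infty,u))$ and the left-continuity built into (3.4).

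The only point requiring care, rather than being a genuine obstacle, is the systematic use of the defining property of a tribe that $\sigma$-complete MV-operations coincide with their pointwise counterparts; this has to be cited explicitly both when checking $\sigma$-additivity of $K(\omega,\cdot)$ and when descending the infimum/supremum of observables to pointwise formulas.
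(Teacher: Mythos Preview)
Your argument is correct and in fact more direct than the paper's. The key difference is in the converse direction: you define $K(\omega,E):=x(E)(\omega)$ and verify the Markov kernel axioms immediately from the fact that the order, sum, and countable suprema in a tribe are computed pointwise; this makes both $K(\cdot,E)\in\mathcal T$ and $x=x_K$ automatic, and uniqueness a tautology. The paper instead builds $K$ indirectly: for each $\omega$ it forms the distribution function $F_\omega(t)=x((-\infty,t))(\omega)$, invokes the classical correspondence with probability measures to obtain $P_\omega$, sets $K(\omega,E)=P_\omega(E)$, and then runs two separate Dynkin/$\pi$-system arguments to show $K(\cdot,E)\in\mathcal T$ and $x=x_K$. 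Your route is cleaner because you exploit the pointwise nature of the tribe at one stroke rather than only on the generating half-lines; the paper's route, on the other hand, is the one that would survive in settings where suprema in the ambient structure are not known a priori to be pointwise (which is why the analogous Theorem~\ref{th:5.4} for effect-tribes is stated without the explicit pointwise formulas). For the order equivalence and the infimum/supremum formulas both arguments coincide, appealing to (3.7) and (3.10) and reading the lattice operations pointwise.
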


\begin{proof}
Let $K$ be a Markov kernel and let $x_K(E):= K(\cdot,E)$, $E \in \mathcal B(\mathbb R)$. Then $x_K(\emptyset) = 0_\Omega$ and $x_K(\mathbb R)=1_\Omega$, where $0_\Omega(\omega)=0$ and $1_\Omega(\omega)=1$, $\omega \in \Omega$. If $E\cap F=\emptyset$, then $x_K(E\cup F)=K(\cdot, E\cup F)= K(\cdot, E)+K(\cdot,F)=x_K(E)+x_K(F)$. Similarly, if $E_n\nearrow E$, then $x_K(E_n)\nearrow x_K(E)$, proving $x_K$ is an observable on $\mathcal T$.

Now let $x$ be an observable on a tribe $\mathcal T$.

{\it Existence of $K$.}
We denote by $b_t=x((-\infty,t))\in \mathcal T$ for each $t \in \mathbb R$. Then $\{b_t: t \in \mathbb R\}$ is a system of functions from $\mathcal T$ such that $b_s\le b_t$ whenever $s<t$.
Let $\omega$ be a fixed element of $\Omega$. We define $F_\omega(t):= b_t(\omega)$, $t \in \mathbb R$.  Due to (3.2)--(3.4), we see that $F_\omega$ is a non-decreasing, left continuous function, such that $\lim_{t \to -\infty} F_\omega(t)=0$  and $\lim_{t \to \infty} F_\omega(t)=1$. By \cite[Thm 43.2]{Hal}, $F_\omega$ is a distribution function on $\mathbb R$ corresponding to the unique probability measure $P_\omega$ on $\mathcal B(\mathbb R)$, that is, $P_\omega((-\infty,t))=F_\omega(t)$ for every $t \in \mathbb R$. Define now a mapping $K:\Omega \times \mathcal B(\mathbb R) \to [0,1]$ by $K(\omega,E)=P_\omega(E)$, $\omega \in \Omega$ and $E \in \mathcal B(\mathbb R)$.   In particular, we have $K(\cdot,(-\infty,t)) = b_t\in \mathcal T$ for any $t \in \mathbb R$.

We assert that $K$ is a Markov kernel. First we show that every $K(\cdot,E) \in \mathcal T$ for any $E \in \mathcal B(\mathbb R)$, let $\mathcal K$ be the system of all $E \in \mathcal B(\mathbb R)$ such that $K(\cdot,E) \in \mathcal T$. Then $\mathcal K$ is a Dynkin system. The system $\mathcal K$ contains all intervals $(-\infty, t)$ for $t \in \mathbb R$, which is a $\pi$-system.  Similarly as in the proof of Theorem \ref{th:3.1}, $\mathcal K$ is a $\sigma$-algebra, and hence $\mathcal K = \mathcal B(\mathbb R)$. Since $K(\omega,E)=P_\omega(E)$, we see that $K$ is a Markov kernel. Now if $\mathcal H=\{E \in \mathcal B(\mathbb R): x(E)=x_K(E)\}$, in the same way as for $\mathcal K$, we can show $\mathcal H=\mathcal B(\mathbb R)$.

{\it Uniqueness of $K$.} Let $H$ be another Markov kernel such that $x=x_H$. Then $x((-\infty,t))(\omega)= P_\omega(E)= K(\omega,E)=H(\omega, E)$, $\omega \in \Omega$ and $E \in \mathcal B(\mathbb R)$. Similarly as for $\mathcal K$ and $\mathcal H$, we can prove $K=H$.

The property $x_K\preceq_s x_H$ iff $K\preceq H$ follows from the definitions of $\preceq_s$ and $\preceq$.

The formulas for infima and suprema follow from definition of $\preceq_s$, (3.7) and (3.10).
\end{proof}

Now if $\mathcal T$ is an effect-tribe of $[0,1]$-valued functions on  $\Omega \ne \emptyset$, we can define a {\it Markov kernel} associated with the effect-tribe $\mathcal T$ in the same way as that for a tribe. In the same way as Theorem \ref{th:5.3}, we can prove the following result.

\begin{theorem}\label{th:5.4}
Let $\mathcal T$ is an effect-tribe of functions on a non-empty set $\Omega$. Let $K$ be a Markov kernel associated with $\mathcal T$. Then the mapping $x_K:\mathcal B(\mathbb R)\to \mathcal T$ defined by $x_K(E):= K(\cdot,E)$, $E\in \mathcal B(\mathbb R)$, is an observable on $\mathcal T$. Conversely, let $x$ be an observable on $\mathcal T$. Then there is a unique Markov kernel $K$ associated with $\mathcal T$ such that $x=x_K$.

Let $K$ and $H$ be two Markov kernels. We write $K \preceq H$ if $H(\omega,(-\infty,t))\le K(\omega,(-\infty,t))$, $\omega\in \Omega$, $t \in \mathbb R$.  Then $x_K\preceq_s x_H$ if and only if $K \preceq H$.
\end{theorem}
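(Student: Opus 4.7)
The plan is to mirror the proof of Theorem \ref{th:5.3}, checking at each step that the slightly weaker structure of an effect-tribe (closure under $f\mapsto 1-f$, under partial sums $f+g$ when $f\le 1-g$, and under pointwise monotone suprema) still supports every argument used in the tribe case.

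For the forward direction, given a Markov kernel $K$ I would verify that $x_K(E):=K(\cdot,E)$ satisfies the three axioms of an observable on $\mathcal T$. The identity $x_K(\mathbb R)=1_\Omega$ is immediate since each $K(\omega,\cdot)$ is a probability measure. For disjoint $E,F\in \mathcal B(\mathbb R)$, the pointwise sum $K(\cdot,E)+K(\cdot,F)=K(\cdot,E\cup F)$ is bounded by $1$, and since $K(\cdot,E)\le 1-K(\cdot,F)$ in $\mathcal T$, the sum lies in $\mathcal T$ by condition (iii) of an effect-tribe; this gives additivity. For $\sigma$-continuity, $E_n\nearrow E$ forces $K(\omega,E_n)\nearrow K(\omega,E)$ pointwise, and condition (iv) of an effect-tribe closes $\mathcal T$ under such pointwise monotone limits, giving $x_K(E)=\bigvee_n x_K(E_n)$ in $\mathcal T$.

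For the converse, I would copy the construction from Theorem \ref{th:5.3} verbatim: set $b_t:=x((-\infty,t))\in \mathcal T$, define $F_\omega(t):=b_t(\omega)$ for each $\omega\in\Omega$, use the spectral-resolution properties (3.2)--(3.4) to see that $F_\omega$ is a non-decreasing, left-continuous function with $\lim_{t\to-\infty}F_\omega(t)=0$ and $\lim_{t\to\infty}F_\omega(t)=1$, invoke the standard distribution-function/measure correspondence (as in \cite[Thm 43.2]{Hal}) to obtain a unique probability measure $P_\omega$ on $\mathcal B(\mathbb R)$ with $P_\omega((-\infty,t))=F_\omega(t)$, and set $K(\omega,E):=P_\omega(E)$. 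Condition (ii) of a Markov kernel is automatic. To establish condition (i), that $K(\cdot,E)\in\mathcal T$ for every Borel $E$, I would repeat the Dynkin/$\pi$-system argument: the class $\mathcal K=\{E\in \mathcal B(\mathbb R):K(\cdot,E)\in \mathcal T\}$ contains the $\pi$-system of left-open rays $(-\infty,t)$ by construction, and is a Dynkin system precisely because an effect-tribe is closed under $f\mapsto 1-f$ (handling complements via $K(\cdot,\mathbb R\setminus E)=1-K(\cdot,E)$) and under countable sums of orthogonal elements via (iii)+(iv) (handling disjoint unions).

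Uniqueness of $K$ then falls out immediately, since any two Markov kernels yielding the same observable would agree on every ray $(-\infty,t)$, hence produce identical distribution functions $F_\omega$ and therefore identical $P_\omega$ on $\mathcal B(\mathbb R)$; the uniqueness can also be formulated by running the same $\pi$-$\lambda$ argument on $\mathcal H=\{E:x_K(E)=x_H(E)\}$. The order equivalence $x_K\preceq_s x_H\Leftrightarrow K\preceq H$ is then a direct transcription of definition (3.6), since $x_K((-\infty,t))(\omega)=K(\omega,(-\infty,t))$. The only mildly delicate point in the whole argument is checking that the Dynkin closure properties are available in an effect-tribe rather than in a tribe, and this is exactly what axioms (ii)--(iv) of an effect-tribe provide, so no new obstacle arises.
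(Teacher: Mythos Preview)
Your proposal is correct and follows exactly the approach the paper intends: the paper's own proof of Theorem \ref{th:5.4} consists solely of the sentence ``In the same way as Theorem \ref{th:5.3}, we can prove the following result,'' and you have carried out precisely that transfer, correctly noting that the effect-tribe axioms (ii)--(iv) suffice for the Dynkin-system closure. One small expository omission: after constructing $K$ and verifying it is a Markov kernel, you should explicitly record (via the same $\pi$--$\lambda$ argument applied to $\mathcal H=\{E:x(E)=x_K(E)\}$) that $x=x_K$, before passing to uniqueness; this step is in the proof of Theorem \ref{th:5.3} and is needed to complete the existence claim.
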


\begin{theorem}\label{th:5.2}
Let $E$ be a Boolean $\sigma$-algebra, $\mathcal S$ a $\sigma$-algebra of subsets of a non-void set, and $h$ a $\sigma$-homomorphism from $\mathcal S$ onto $E$.

{\rm(1)} If $x$ is an observable on $E$, there is an $\mathcal S$-measurable function $f: \Omega\to \mathbb R$ such that $x= h\circ f^{-1}$. If in addition, $x=h\circ h_1^{-1}$, then $h(\{\omega: f(\omega)\ne g(\omega)\}=0$.

If $x= h\circ f^{-1}$ and $y=h\circ g^{-1}$, then $x\preceq_s y$ if and only if $h(\{\omega: g(\omega) < f(\omega)\})=0$.

Let $x_n= h\circ f_n$, $n\ge 1$. If $\{x_n\}$ are bounded observables bounded from below, then $\bigwedge_n x_n= h\circ (\inf_n f_n)$. If $\{x_n\}$ are bounded bounded observables bounded from above, then $\bigvee_n x_n= x\circ (\sup_n f_n)$.

{\rm (2)} Let $\{x_n\}$ be a system of bounded observables of the Boolean algebra $E$. There are an observable $x$ and a sequence of Borel measurable functions $\{f_n\}$ from $\mathbb R$ into $\mathbb R$ such that $x_n=f_n(x)$, $n\ge 1$.

If $x=f(z)$ and $y=g(z)$, then $x\preceq_s$ if and only if $x(\{s: g(s)>f(s)\})=0.$ In addition, if $\{f_n(x)\}$ is bounded from below, then $\bigwedge_n f(x)=(\inf_nf_n)(x)$ and if $\{f_n(x)\}$ is bounded from above, then $\bigvee_n f_n(x)=(\sup_n f_n)(x)$ under the Olson order.
\end{theorem}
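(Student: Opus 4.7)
The plan is to reduce everything to Theorem \ref{th:5.1} via the surjective $\sigma$-homomorphism $h:\mathcal S\to E$. Part (1) lifts observables on $E$ to measurable functions on $\mathcal S$; part (2) additionally codes a countable family of such lifts into a single master function. Because $h$ preserves countable joins and complements, it will carry the spectral resolution and the formulas (3.7), (3.10) for $\bigwedge$ and $\bigvee$ from $\mathcal S$ back to $E$, converting the $\mathcal S$-side statements of Theorem \ref{th:5.1} into the claimed $E$-side statements.

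For part (1), I will first build $f$. Choose representatives $A_t\in\mathcal S$ with $h(A_t)=x((-\infty,t))$ for each rational $t$, and replace them by the monotone family $B_t:=\bigcup_{s\le t,\,s\in\mathbb Q}A_s$; since $h$ preserves countable joins and (3.4) together with density of $\mathbb Q$ give $\bigvee_{s\le t,\,s\in\mathbb Q}x_s=x_t$, the $h$-values are unchanged, while $B_s\subseteq B_t$ whenever $s\le t$. Using (3.3), both $\bigcap_{t\in\mathbb Q}B_t$ and $\Omega\setminus\bigcup_{t\in\mathbb Q}B_t$ have $h$-value $0$, so after removing this $h$-null set the family $\{B_t\}$ behaves as an honest $\mathcal S$-valued spectral resolution. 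The formula $f(\omega):=\inf\{r_j:\omega\in B_{r_j}\}$, as in the proof of Theorem \ref{th:5.1}, then defines a finite $\mathcal S$-measurable function satisfying $h(f^{-1}((-\infty,t)))=x((-\infty,t))$ for every rational $t$; a Dynkin-system argument identical to the one used in the proof of Theorem \ref{th:3.1} extends this to $x=h\circ f^{-1}$ on all of $\mathcal B(\mathbb R)$.

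For the remaining assertions of part (1), I will use the decomposition
\[
\{\omega: g(\omega)<f(\omega)\}=\bigcup_{t\in\mathbb Q}\bigl(g^{-1}((-\infty,t))\setminus f^{-1}((-\infty,t))\bigr),
\]
together with its symmetric version for $\{f\ne g\}$. From these identities, uniqueness modulo $h$-null sets and the characterization $x\preceq_s y\iff h(\{g<f\})=0$ both follow from countable additivity of $h$ applied to the rational-$t$ spectral comparison. The infimum and supremum formulas are then automatic: Lemmas \ref{le:2.3} and \ref{le:2.5} compute $\bigwedge_n x_n$ and $\bigvee_n x_n$ via (3.7), (3.10) from the sets $x_n((-\infty,t))$; since Theorem \ref{th:5.1} gives these sets as $f_n^{-1}((-\infty,t))$ in $\mathcal S$ and $h$ preserves countable joins, the $E$-side infimum is realised by $\inf_n f_n$ and the supremum by $\sup_n f_n$.

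For part (2), apply part (1) to lift each $x_n$ to a bounded $\mathcal S$-measurable function $g_n$ (boundedness is inherited since $x_n$ is bounded). The crucial step is the joint coding of the sequence $(g_1,g_2,\ldots)$: fix affine rescalings $\phi_n$ sending the range of $g_n$ into $[0,1]$ and a Borel isomorphism $\Phi:[0,1]^{\mathbb N}\to[0,1]$, and set $g:=\Phi\circ(\phi_1\circ g_1,\phi_2\circ g_2,\ldots)$, $f_n:=\phi_n^{-1}\circ\pi_n\circ\Phi^{-1}$. Then $g$ is a bounded $\mathcal S$-measurable function, each $f_n$ is Borel, and $g_n=f_n\circ g$. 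Setting $x:=h\circ g^{-1}$ yields $x_n=h\circ g_n^{-1}=h\circ g^{-1}\circ f_n^{-1}=f_n(x)$. The order and lattice statements then reduce to part (1) applied to the pair of lifts $f\circ r,\,g\circ r$, where $r$ is a measurable lift of $z$ furnished by part (1); the translation $h(r^{-1}(C))=z(C)$ converts the $h$-null condition into the observable-null condition $z(\{s:g(s)>f(s)\})=0$, and the same pushforward gives $\bigwedge_n f_n(x)=(\inf_n f_n)(x)$ and $\bigvee_n f_n(x)=(\sup_n f_n)(x)$. The main obstacle is the joint coding step, which is a standard descriptive-set-theoretic device but is the one ingredient not already prepared in the paper.
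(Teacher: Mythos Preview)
Your argument is correct and, for part (1), essentially matches the paper: the paper's proof of the order characterization decomposes $\{g<f\}$ over rationals exactly as you do (writing it as $\bigcup_{r\in\mathbb Q}\{g<r\}\cap\{f\ge r\}$), and your derivation of the $\bigwedge,\bigvee$ formulas by pushing (3.7) and (3.10) through the $\sigma$-homomorphism $h$ is what the paper leaves implicit. The only difference is that the paper cites \cite[Thm 1.4]{Var} for the existence and uniqueness of the lift $f$, while you construct it directly by pulling the spectral resolution back through $h$ and invoking the infimum formula from the proof of Theorem~\ref{th:5.1}. For part (2) the divergence is larger: the paper again defers to an external reference, \cite[Thm 6.9]{Var}, for the master observable $x$ with $x_n=f_n(x)$, whereas your Borel-isomorphism coding $\Phi:[0,1]^{\mathbb N}\to[0,1]$ is effectively a direct proof of that cited result, making your route self-contained where the paper's is not. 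One small gap to patch: boundedness of $x_n$ does not literally force your lift $g_n$ to be bounded, since $g_n$ may misbehave on an $h$-null set; truncate each $g_n$ to an interval containing $\sigma(x_n)$ before applying the affine rescalings $\phi_n$, and then the coding goes through unchanged.
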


\begin{proof}
(1) If $x$ is an observable, applying \cite[Thm 1.4]{Var}, we can find a real-valued  $\mathcal S$-measurable function $f$ on $\Omega$ such that $x=h\circ f^{-1}$. Uniqueness of $f$ in the mentioned sense was also guaranteed in \cite[Thm 1.4]{Var}.

Let $x=h\circ f^{-1}$ and $y=h\circ g^{-1}$. Assume $x\preceq_s y$. Then $y((-\infty,t))\le x((-\infty,t))$ for each real $t$. Calculate

\begin{eqnarray*}
h(\{\omega: g(\omega)<f(\omega)\})&= &h(\bigcup_{r \in \mathbb Q}\{\omega: g(\omega)<r< f(\omega)\})\\
&=& \bigvee_{r \in \mathbb Q}h(\{\omega: g(\omega) <r\} \cap \{\omega: r< f(\omega)\})\\
&=&\bigvee_{r \in \mathbb Q} y((-\infty,r))\wedge x([r,\infty))\\
&\le & \bigvee_{r \in \mathbb Q} x((-\infty,r))\wedge x([r,\infty))=0.
\end{eqnarray*}

Conversely, assume $h(\{\omega: g(\omega)<f(\omega)\})=0$. Then we have $y((-\infty,t)) = h(g^{-1}((-\infty, t)))= h(\{\omega: g(\omega)< t\}\cap \{\omega: g(\omega)<f(\omega)\}) \vee h(\{\omega: g(\omega)< t\}\cap \{\omega: f(\omega)\le g(\omega)\}) = h(\{\omega: g(\omega)< t\}\cap \{\omega: f(\omega)\le g(\omega)\}) \le h(\{\omega: f(\omega)< t\})= x((-\infty,t))$.

(2) The first part follows from \cite[Thm 6.9]{Var}.

The rest follows the same steps as the proof of (1).
\end{proof}

\section{Conclusion}

The set $\mathcal E(H)$ of effect operators on a Hilbert space $H$ is not a lattice under the standard ordering of operators, whilst the set $\mathcal P(H)$ of all orthogonal projections is a complete orthomodular lattice. Olson \cite{Ols} has introduced a new ordering of operators of $\mathcal E(H)$, called spectral order, such that it coincides on projectors with the standard one and $\mathcal E(H)$ becomes a complete lattice.

Inspiring by Olson \cite{Ols}, we have introduced a partial order, called the Olson order, for the set of bounded observables of a monotone $\sigma$-complete effect algebra $E$. If $E$ is a complete effect algebra, in Lemmas \ref{le:2.3} and \ref{le:2.5}, we have showed that infimas and supremas of any system of bounded observables under the Olson order exists and we showed how to calculate them. Therefore, as it is stated in Theorem \ref{th:2.6}, the set of bounded observables on $E$ is a Dedekind complete lattice. In addition,  the set of observables whose spectra lie in the real interval $[0,1]$ can be equipped with  negation, so that it resembles the properties of $\mathcal E(H)$, Theorem \ref{th:3.5}. In the case $E$ is only a $\sigma$-lattice effect algebra, the set of bounded observables is a Dedekind $\sigma$-lattice. We present also an equivalent definition of the Olson order that is more close to the original one defined by Olson for $\mathcal E(H)$.

Finally, we show how the Olson order can be organized for $\sigma$-algebras of subsets, tribes ($\sigma$-complete MV-algebras of functions with pointwise defined operations), effect-tribes (monotone $\sigma$-complete effect algebras of functions with pointwise defined operations). Some illustrating examples are provided.

The paper provides a new tool for studying observables using lattice properties of the set of bounded observables under the Olson order.


\begin{thebibliography}{DvVe2}

\bibitem[Bau]{Bau}
H. Bauer, {\it ``Probability Theory"}, de Gruyter, Berlin 1996.

\bibitem[BiNe]{BiNe}
G. Birkhoff, J. von Neumann, {\it  The logic of quantum mechanics}, Ann. Math. {\bf  37} (1936), 823--834.

\bibitem[BCD]{BCD}
D. Buhagiar,  E. Chetcuti,  A. Dvure\v censkij,    {\it
Loomis-Sikorski representation of  monotone $\sigma$-complete effect
algebras}, Fuzzy Sets and Systems {\bf 157} (2006), 683--690.

\bibitem[Cat]{Cat}
D. Catlin, {\it Spectral theory in quantum logics}, Inter. J. Theor. Phys.
{\bf 1} (1968), 285--297.

\bibitem[Cha]{Cha}
C.C. Chang, {\it  Algebraic analysis of many-valued logics}, Trans.
Amer. Math. Soc. {\bf 88} (1958), 467--490.

\bibitem[dGr]{dGr}
H.F. de Groote, {\it On a canonical lattice structure on the effect algebra of a von Neumann algebra}, arXiv:math-ph/0410018v2, 2005.

\bibitem[Dvu]{Dvu}
A. Dvure\v censkij,    {\it  ``Gleason's Theorem and Its
Applications"}, Kluwer Academic Publisher,
Dordrecht/Boston/London, 1993, 325+xv pp.

\bibitem[Dvu1]{Dvu1}
A. Dvure\v censkij,    {\it Loomis--Sikorski
theorem  for $\sigma$-complete MV-algebras and $\ell$-groups},
J. Austral. Math. Soc. Ser. A {\bf 68}  (2000), 261--277.

\bibitem[Dvu2]{Dvu2}
A. Dvure\v{c}enskij,
\emph{Central elements and
 Cantor-Bernstein's theorem for pseudo-effect algebras,}
 J. Austral. Math. Soc. {\bf  74} (2003), 121--143.

\bibitem[Dvu3]{270}
A. Dvure\v censkij, {\it Representable effect algebras and observables}, Inter. J. Theor. Phys. {\bf 53} (2014),  2855--2866. DOI: 10.1007/s10773-014-2083-z

\bibitem[DvKu]{DvKu}
A. Dvure\v censkij, M. Kukov\'a,   {\it Observables on quantum structures}, Inf. Sci. {\bf 262} (2014), 215–-222.
DOI: 10.1016/j.ins.2013.09.014


\bibitem[DvPu]{DvPu}
A. Dvure\v censkij, S. Pulmannov\'a,   {\it ``New
Trends in Quantum Structures"}, Kluwer Academic Publ.,
Dordrecht, Ister Science, Bratislava, 2000, 541 + xvi pp.

\bibitem[FoBe]{FoBe}
D.J. Foulis, M.K. Bennett,
{\it  Effect algebras and unsharp quantum logics}, Found. Phys. {\bf
24} (1994), 1325--1346.


\bibitem[GLP]{GLP}
R. Giunitini, A. Ledda, F. Paoli, {\it A new view of effects in a Hilbert space}, submitted.

\bibitem[Gud]{Gud}
S. Gudder, {\it An order for quantum observables}, Math. Slovaca {\bf 56} (2006), 573--589.

\bibitem[Hal]{Hal}
P.R. Halmos, {\it ``Measure Theory"}, Springer-Verlag, Berlin, 1974.

\bibitem[JeRi]{JeRi}
G. Jen\v{c}a, Z. Rie\v canov\'a, {\it On sharp elements in lattice ordered effect algebras}, Busefal {\bf 80} (1999), 24--29.

\bibitem[JPV]{JPV}
A. Jen\v{c}ov\'a, S. Pulmannov\'a, E. Vincekov\'a,  {\it Observables
on $\sigma$-MV-algebras and $\sigma$-lattice effect algebras},
Kybernetika  {\bf 47}  (2011), 541--559.

\bibitem[Kad]{Kad}
R. Kadison, {\it Order properties of bounded self-adjoint operators}, Proc. Amer. Math. Soc. {\bf 2} (1951), 505--510.

\bibitem[Kal]{Kal}
O. Kallenberg, {\it ``Foundations of Modern Probability"}, Springer-Verlag, New York, Berlin, Heidelberg, 1997.

\bibitem[KoCh]{KoCh}
F. K\^{o}pka and F. Chovanec, D-posets, {\it Math.
Slovaca} {\bf 44} (1994), 21--34.

\bibitem[Mun]{Mun}
D. Mundici, {\it   Tensor products and the Loomis--Sikorski theorem
for MV-algebras}, Advan. Appl. Math. {\bf 22} (1999),   227--248.

\bibitem[Ols]{Ols}
M.P. Olson, {\it The self-adjoint operators of a von Neumann algebra form a conditionally complete lattice}, Proc. Amer. Math. Soc. {\bf 28} (1971),
537--544.

\bibitem[Rav]{Rav}
K. Ravindran,
{\it  On a structure theory of effect algebras}, PhD thesis, Kansas
State Univ., Manhattan, Kansas, 1996.

\bibitem[Var]{Var}
V.S. Varadarajan,
{\it ``Geometry of Quantum Theory", Vol. 1},
 van Nostrand,
  Princeton, New Jersey, 1968.

\end{thebibliography}
\end{document}